\newtheorem{theorem}{Theorem}[section]
\theoremstyle{definition}
\newtheorem{conjecture}[theorem]{Conjecture}
\newtheorem{problem}[theorem]{Problem}
\newtheorem{remark}[theorem]{Remark}
\theoremstyle{remark}
\numberwithin{equation}{section}
\newcommand{\eps}{\varepsilon}
\newcommand{\calZ}{\mathcal{Z}}
\newcommand{\calY}{\mathcal{Y}}
\newcommand{\calX}{\mathcal{X}}
\newcommand{\E}{\operatorname{\mathds{E}}} 
\renewcommand{\P}{\operatorname{\mathds{P}}} 
\newcommand{\R}{\mathds{R}}
\newcommand{\prt}{\partial}
\newcommand{\wh}{\widehat}
\newcommand{\wt}{\widetilde}
\DeclareMathOperator{\Var}{Var}
\title[Ballistic aggregation]{Ballistic aggregation displays \\ self-organized criticality}
\author{Krzysztof Burdzy}
\address{Department of Mathematics, Box 354350, University of Washington, Seattle, WA 98195}
\email{burdzy@uw.edu}
\thanks{Research supported in part by Simons Foundation Grant 928958. The author would like to thank the Isaac Newton Institute for Mathematical Sciences, Cambridge, for support and hospitality during the program ``Stochastic systems for anomalous diffusion'' where work on this paper was undertaken. This work was supported by EPSRC grant no EP/R014604/1.}
\begin{document}

\begin{abstract}

Consider the convex hull of a collection of disjoint open discs with radii $1/2$. 
The boundary of the convex hull consists of a finite number of line segments and arcs. Randomly choose a point in one of the arcs in the boundary so that the density of its distribution is proportional to the arc measure. 
Attach a new disc at the chosen point so that it is outside of the convex hull and tangential to its boundary. Replace the original convex hull with the convex hull of all preexisting discs and the new disc. Continue in the same manner.
Simulations show that disc clusters form long, straight, or slightly curved filaments with small side branches and occasional macroscopic side branches. The shape of the convex hull is either an equilateral triangle or a quadrangle. Side branches play the role analogous to avalanches in sandpile models, one of the best-known examples of self-organized criticality (SOC). Our results indicate that the size of a branch obeys a power law, as expected of avalanches in sandpile models and similar ``catastrophies'' in other SOC models.

\end{abstract}

\maketitle

\section{Introduction}\label{intro}

We will discuss an accumulation model that sits at the end of the spectrum of models introduced in \cite{stefan}.
Its inspiration is the famous ``diffusion-limited aggregation'' (DLA) model, but its mechanism is, in a sense, the opposite of the DLA accumulation scheme.

The initial cluster consists of a finite number of disjoint open discs in $\R^2$ with radii $1/2$.
Let $C_0$ be the convex hull of these discs. The boundary of $C_0$ consists of a finite number of line segments and arcs. We randomly choose a point in $\prt C_0$ on one of the arcs so that the density of its distribution is proportional to the arc measure. 
We attach a new disc at the chosen point so that it is outside $C_0$ and tangential to $\prt C_0$. We let $C_1$ be the convex hull of $C_0$ and the new disc. We continue in the same manner and construct $C_n$ for $n\geq 0$. See Fig. \ref{fig_small} for a sample of small cluster simulations.

\begin{figure}
    \centering
    \begin{minipage}{0.33\textwidth}
        \centering
        \includegraphics[width=0.99\textwidth]{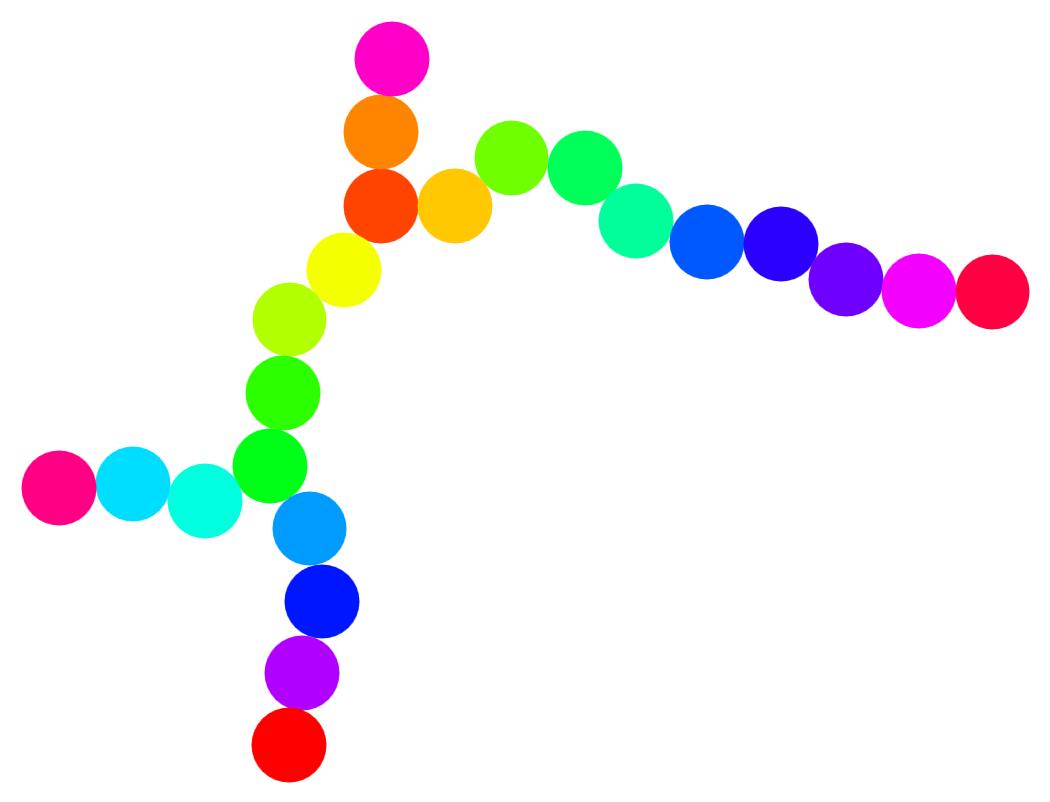}
    \end{minipage}\hfill
    \begin{minipage}{0.33\textwidth}
        \centering
        \includegraphics[width=0.99\textwidth]{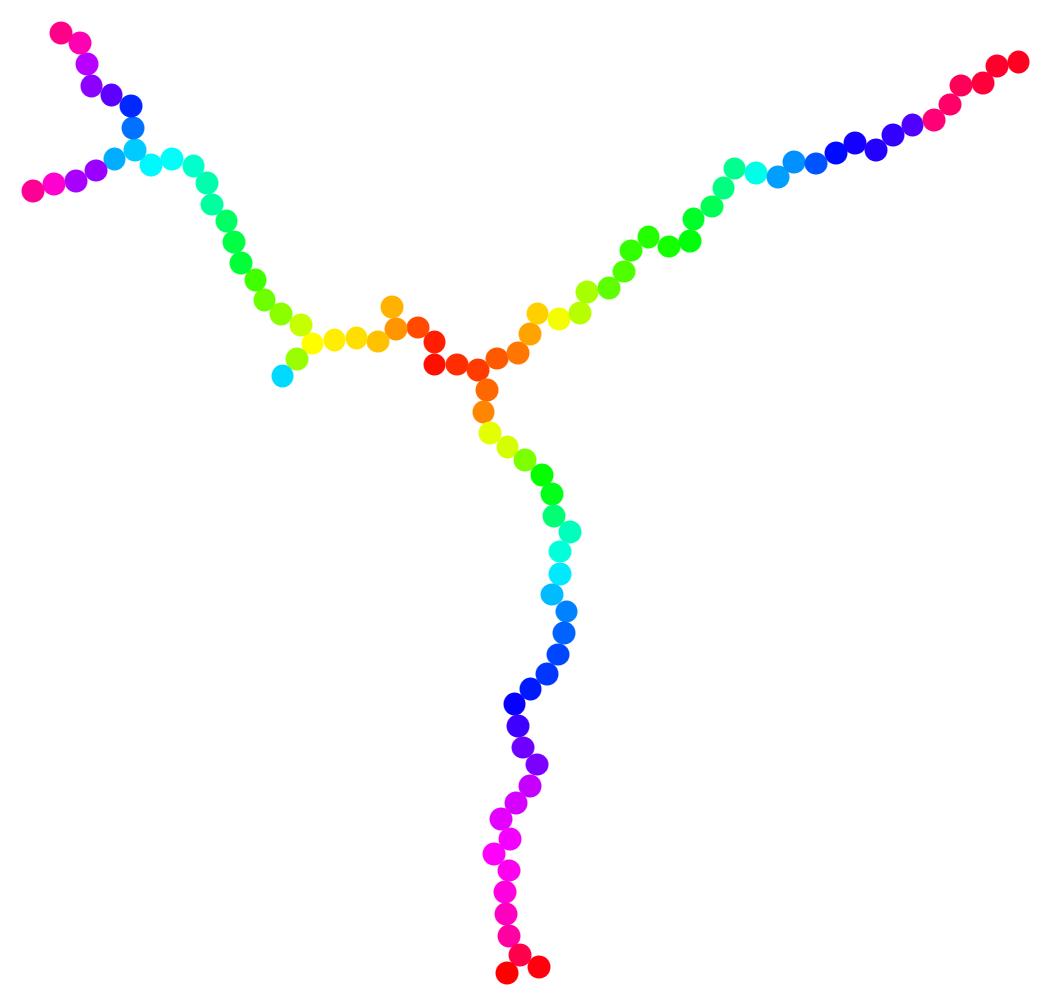} 
    \end{minipage}
    \begin{minipage}{0.33\textwidth}
        \centering
        \includegraphics[width=0.99\textwidth]{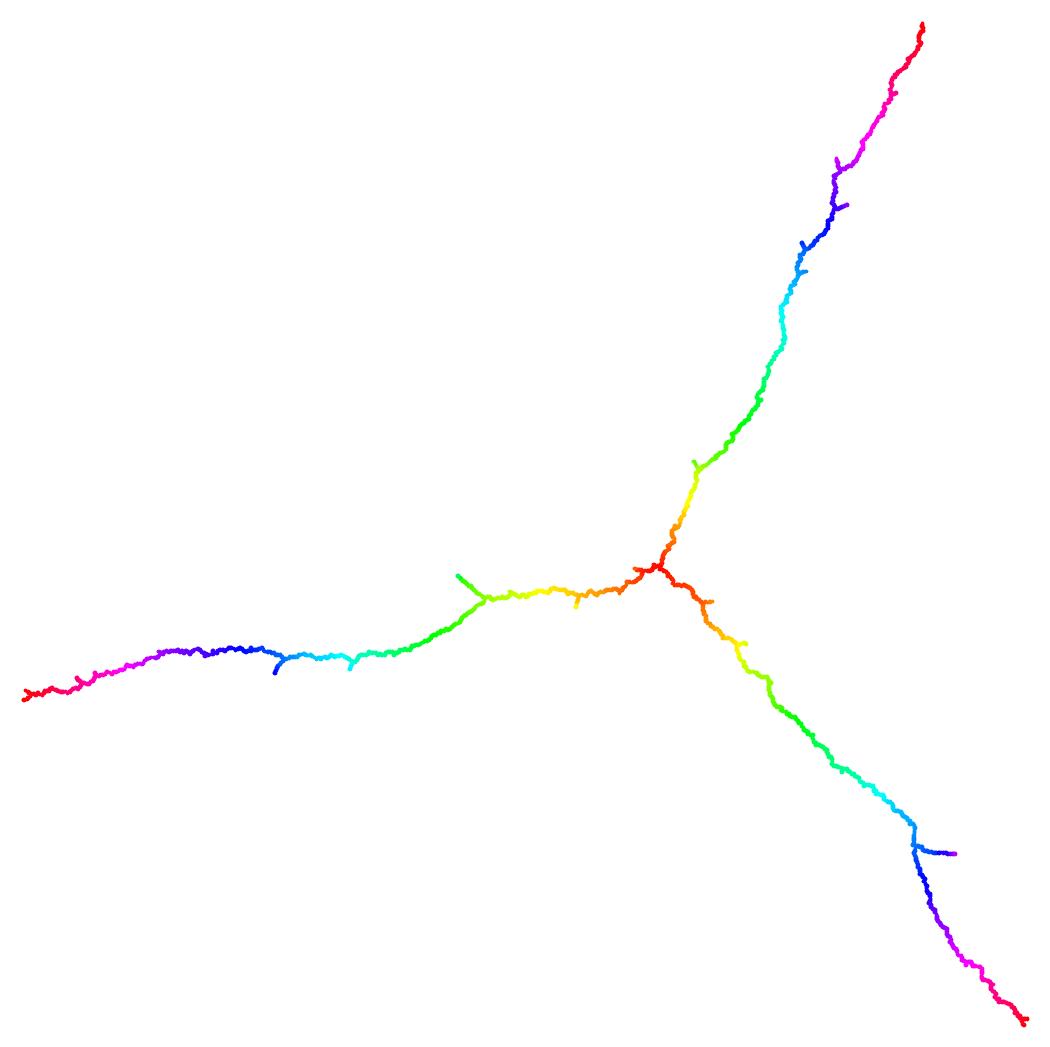}  
    \end{minipage}
    \caption{Simulations of clusters. Initial clusters consisted of single discs. Colors indicate the order of attachment. The numbers of discs in the clusters are 20, 100 and 1,000. } \label{fig_small}
\end{figure}

Heuristically, the construction is equivalent to placing a disc at a very large distance from the origin (``at infinity'') at a uniform angle and moving the disc along a straight line towards the closest point in $\prt C_n$. When the disc hits $C_n$, it stops and is attached to the cluster. The set $C_{n+1}$ is defined as the convex hull of $C_n$ and the new disc.

Computer simulations suggest that for large $n$, the clusters consist of three or four macroscopic (i.e., comparable in size with the whole cluster) arms. If there are three arms, they are almost straight, almost equally long, forming angles close to $2\pi/3$, and possessing many small side branches. Sometimes, some of the side branches are macroscopically large. Side branches play the role analogous to avalanches in sandpile models, one of the best-known examples of self-organized criticality (SOC). Our results indicate that the size of a branch obeys a power law, as expected of avalanches in sandpile models and similar ``catastrophies'' in other SOC models.

\subsection{History of growth models}
The best-known random growth model is Diffusion Limited Aggregation (DLA), introduced by Witten \& Sanders \cite{witten}. DLA models random growth on the standard two-dimensional lattice by assuming that a particle coming from `infinity' performs a random walk until it first touches an already existing set of particles, at which point it gets stuck for all time. The main results are due to Kesten \cite{ kesten1, kesten2} who proved that the diameter of a cluster of $n$ particles under DLA on the lattice $\mathbb{Z}^2$ is $\lesssim n^{2/3}$ (along with generalizations to $\mathbb{Z}^d$). No non-trivial lower bound (better than $\gtrsim n^{1/2}$) is known.
Only a handful of rigorous results about DLA are known. Among them is the proof of the existence of infinitely many holes in the two-dimensional DLA cluster given in \cite{eberz}.

Many other models have been proposed, including the Eden model \cite{eden},  the Vold--Sutherland model \cite{suh, vold}, the Dielectric Breakdown Model (DBM) \cite{nie} and the Hastings-Levitov model \cite{hastings}. There are only a few rigorous results. 

\subsection{History of self-organized criticality}
The idea of self-organized criticality was introduced by Per Bak, Chao Tang and Kurt Wiesenfeld in  \cite{BTW87,BTW88}. The books \cite{Bak,Jensen} remain the standard introductions. For a more recent monograph, see \cite{Prue}.

The most popular SOC models are sandpiles and forest fires. We list here only a few references for sandpiles \cite{sand1,sand2} and forest fires \cite{forest1,forest2,forest3}, to serve as starting points for the search of an extensive bibliography.

\subsection{Organization of the paper}
The next section presents simulation results and offers some general observations. Section \ref{o30.12} sets up a model for the growth of one branch in the cluster of discs. This is followed by Section \ref{o31.1}, which contains a calculation of the transition probabilities for the Markov chain defined in Section \ref{o30.12}. Section \ref{secOU} presents estimates for the distribution of the escape time from an interval for the continuous-time counterpart of the model 
introduced in Section \ref{o30.12}. Section \ref{n1.5} discusses the consequences of the escape time estimates. Section \ref{n1.6} contains a comparison of our ``time-dependent anti-Ornstein-Uhlenbeck process'' with the classical Ornstein-Uhlenbeck process. Section \ref{n1.7} contains a sketch of a proof that the asymptotic shape of the convex hull cannot be a polygon with distinct obtuse angles.

\section{Simulation results with discussion}\label{o30.2}

Fig. \ref{fig_basic} shows a sample of clusters with different sizes, from $10^4$ discs to $10^6$ discs. Our simulation results suggest that, when the number of discs grows, the shape of the convex hull becomes either close to the equilateral triangle or a quadrangle.  
Fig. \ref{fig_small} suggests that the triangular shape of the convex hull emerges already in clusters of size 100.

We will call a disc ``extremal'' if it touches the boundary of the convex hull $\prt C_n$.
The number of extremal discs is typically very small. In each of the three simulations of clusters with $10^6$ discs presented in the bottom row of Fig. \ref{fig_basic}, the number of discs on the boundary was 9. In a (non-pictured) sequence of five simulations of clusters with $10^5$ discs, the numbers of discs on the boundary were 7, 6, 6, 5, and 8. This strongly indicates that the diameter of $C_n$ grows at a linear speed in $n$.
Hence, our ballistic aggregation model lies at the other end of aggregation models than the Eden model (\cite{eden}) because it was proved in \cite{DHL} that the diameter of an Eden cluster has size $n^{1/2}$.

The stability of the shape of the convex hull can be explained as follows. Suppose there are three or four main straight components of $\prt C_n$. Consider two adjacent line segments $I_1$ and $I_2$, in the sense that two of their endpoints are much closer than the cluster's diameter. Let $A$ be the part of $\prt C_n$ between the closest endpoints of $I_1$ and $I_2$. The rate at which discs are attached to $A$ depends on the angle between $I_1$ and $I_2$. The smaller the angle, the higher the rate. At the same time, the cluster shows a branching tendency close to $A$ that also depends on the angle. When the angle is small, the branching rate is high. If the cluster has a fork close to $A$, the growth in this direction will slow down because the two (or more) arms of the fork have to grow at the same time. 
Hence, there are two contradictory mechanisms that could either slow down or speed up acute corners. Simulations show that the second mechanism appears to win, i.e.,
if $\prt C_n$ is approximately polygonal, the pointed corners tend to slow down. If there are three main straight components of $\prt C_n$, the angles between them revert to $\pi/3$. 

If there are four main straight components of $\prt C_n$, the angles are not stable. The largest angle tends to grow until it reaches $\pi/2$ and then the adjacent macroscopic line segments in $\prt C_n$ merge into one. 
The simulations of clusters with $10^6$ discs presented in the bottom row of  \ref{fig_basic} raise a question of whether macroscopic side branches will occur on a large scale. In other words, 

\begin{problem}\label{o30.1}
    For a large number of discs, how likely is it to see (at least) four rather than three macroscopic branches? 
\end{problem}

Regrettably, we cannot prove any results on our ballistic aggregation model at this time. 
We separated Problem \ref{o30.1} from other problems listed below because this is the only problem on which we can shed light via rigorous theorems. 
We list several other problems of various levels of generality and difficulty below.
Some of these problems are subproblems of other problems on the list. We state them separately because we believe that subproblems might be more tractable than the most general problems.

\begin{problem}\label{o30.2a}
    (i) Let $C_n'$ be the convex hull $C_n$ rescaled so that its diameter is 1. Find the limit of distributions of $\prt C_n'$ as $n\to\infty$.

    (ii) Is the limiting distribution in (i) concentrated on equilateral triangles and quadrangles?

    (iii) When $n$ is very large and $C_n'$ has a shape close to a (non-necessarily equilateral) triangle, what is the speed of growth of $C_n$ in various directions as a function of the angles of the triangle?

    (iv) Find the limiting distribution of the number of discs touching $\prt C_n$ as $n\to\infty$.

    (v) Prove a non-trivial lower bound (better than $\gtrsim n^{1/2}$) for the diameter of $C_n$.

    (vi) Can $C_n'$ converge to a polygon with obtuse angles (necessarily with at least 5 vertices)? 
\end{problem}

\begin{figure}
    \centering
    \begin{minipage}{0.33\textwidth}
        \centering
        \includegraphics[width=0.99\textwidth]{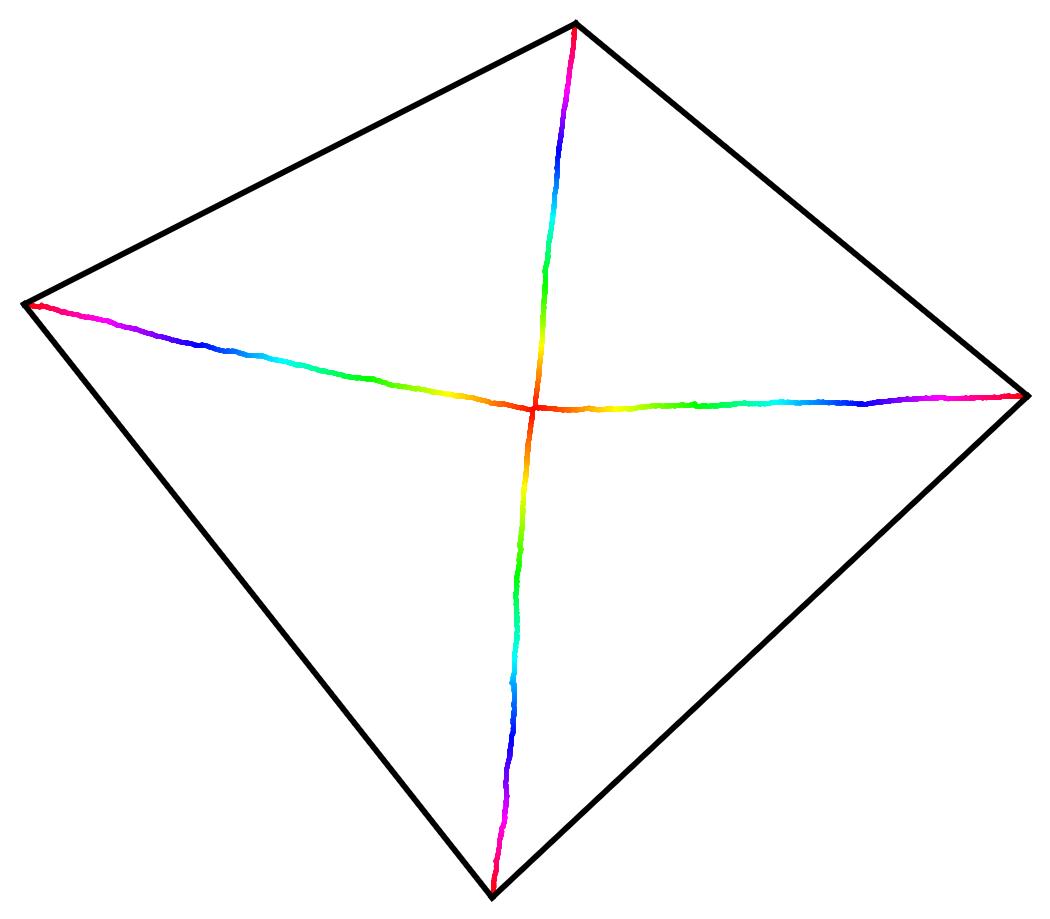}
    \end{minipage}\hfill
    \begin{minipage}{0.33\textwidth}
        \centering
        \includegraphics[width=0.99\textwidth]{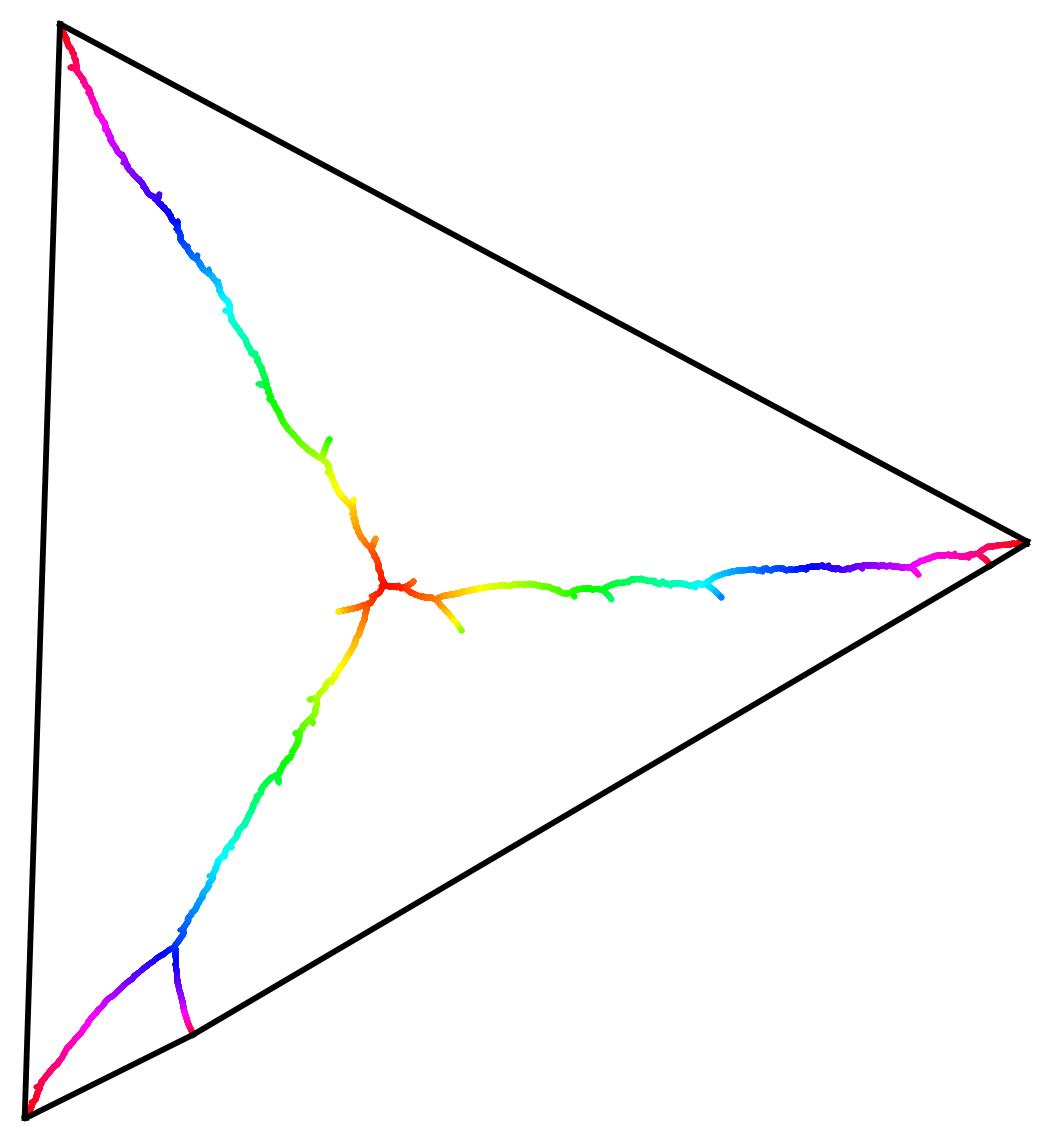} 
    \end{minipage}
    \begin{minipage}{0.33\textwidth}
        \centering
        \includegraphics[width=0.99\textwidth]{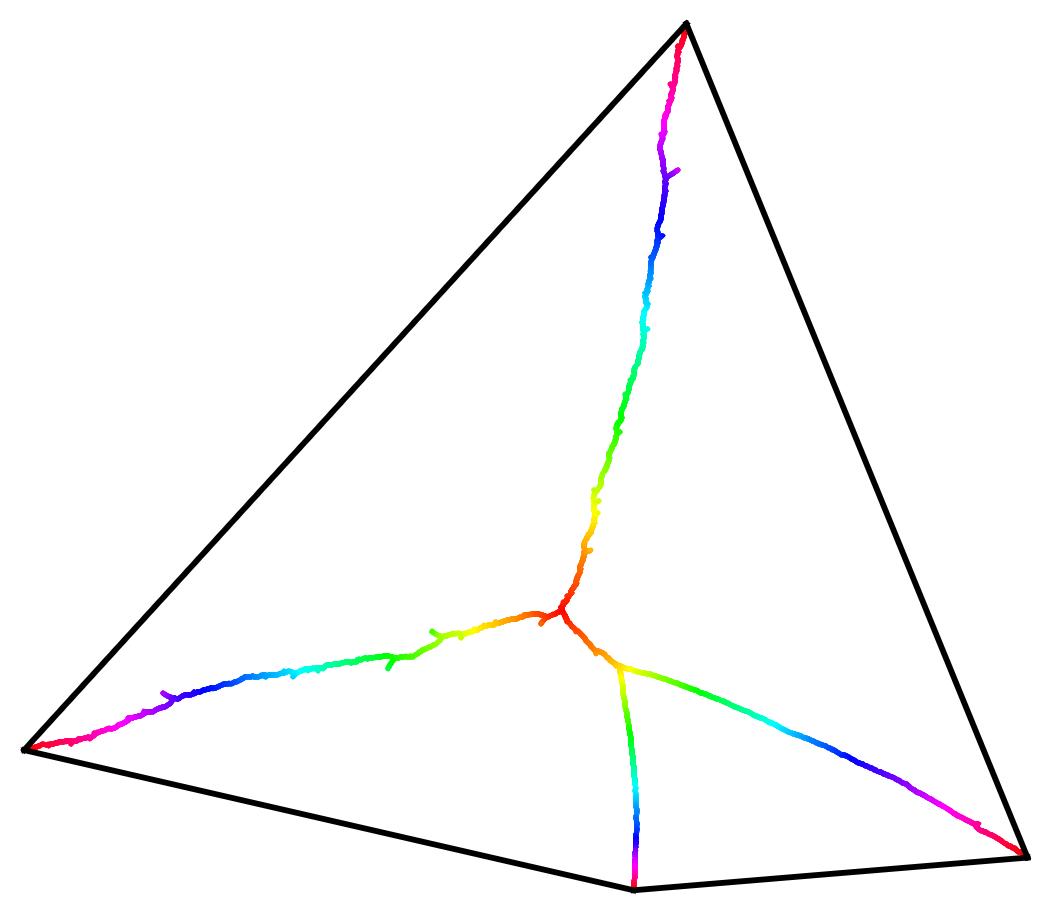}  
    \end{minipage}
    \\
\begin{minipage}{0.33\textwidth}
        \centering
        \includegraphics[width=0.99\textwidth]{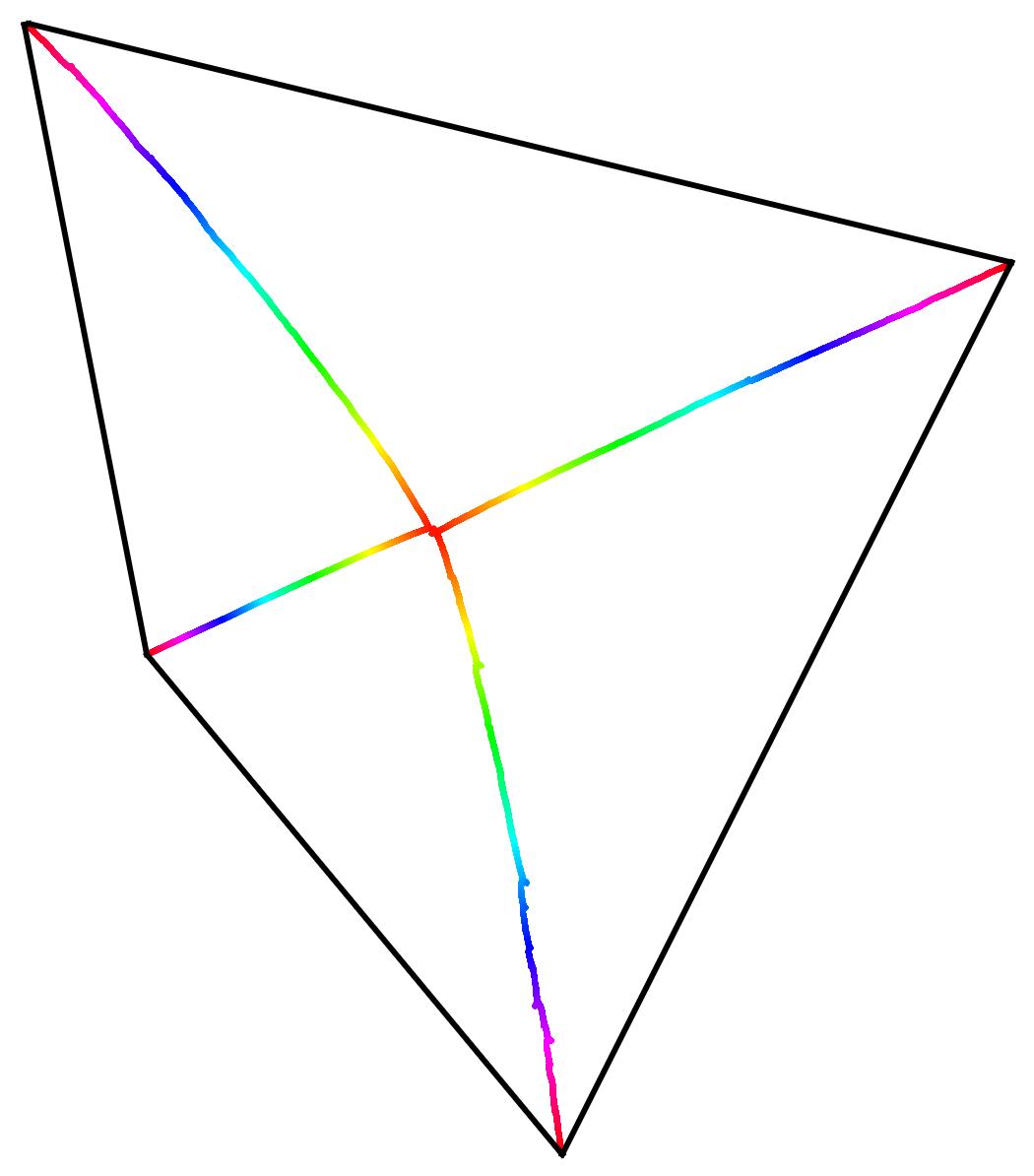}  
    \end{minipage}\hfill
    \begin{minipage}{0.33\textwidth}
        \centering
        \includegraphics[width=0.99\textwidth]{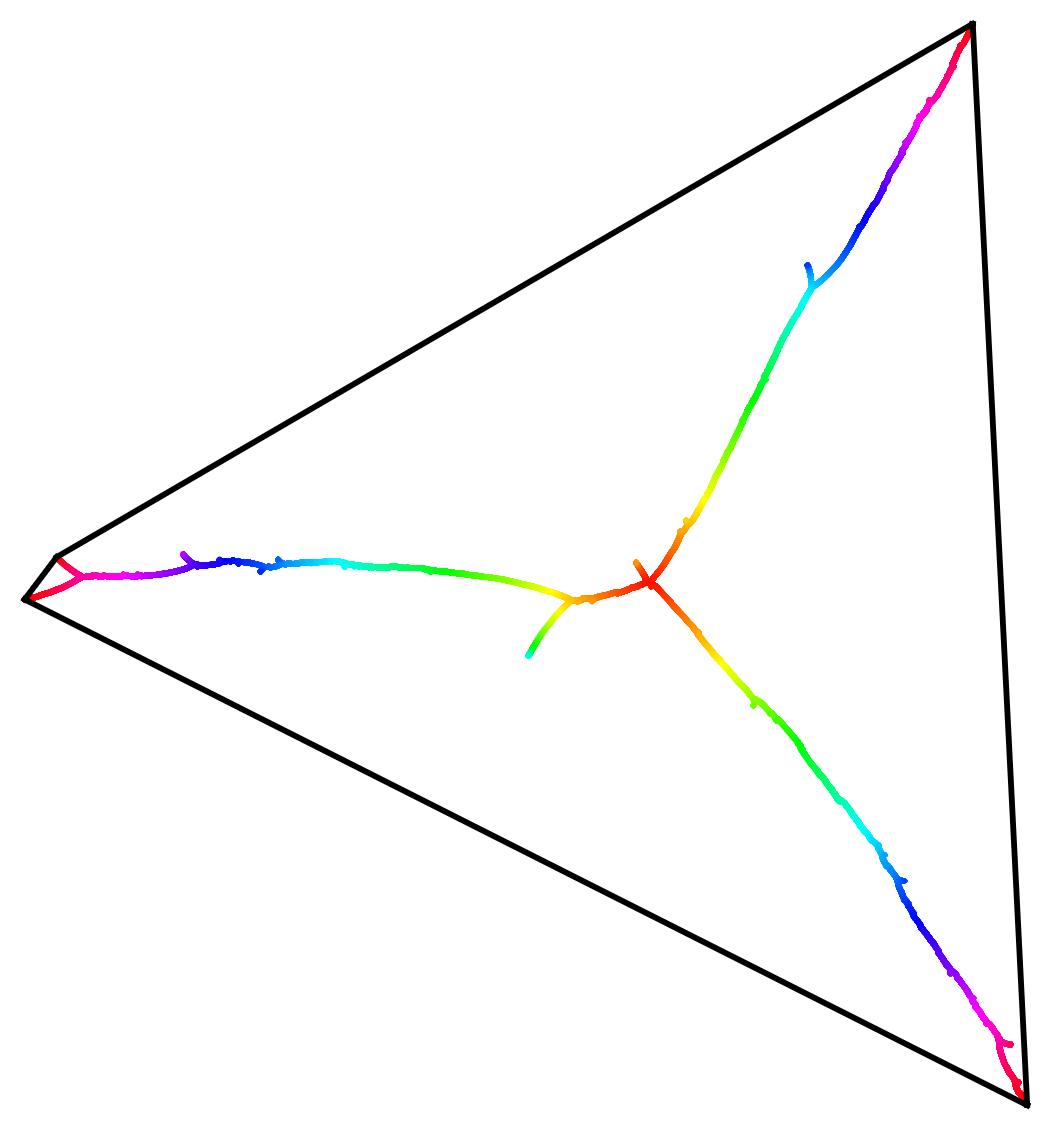}  
    \end{minipage}
    \begin{minipage}{0.33\textwidth}
        \centering
        \includegraphics[width=0.99\textwidth]{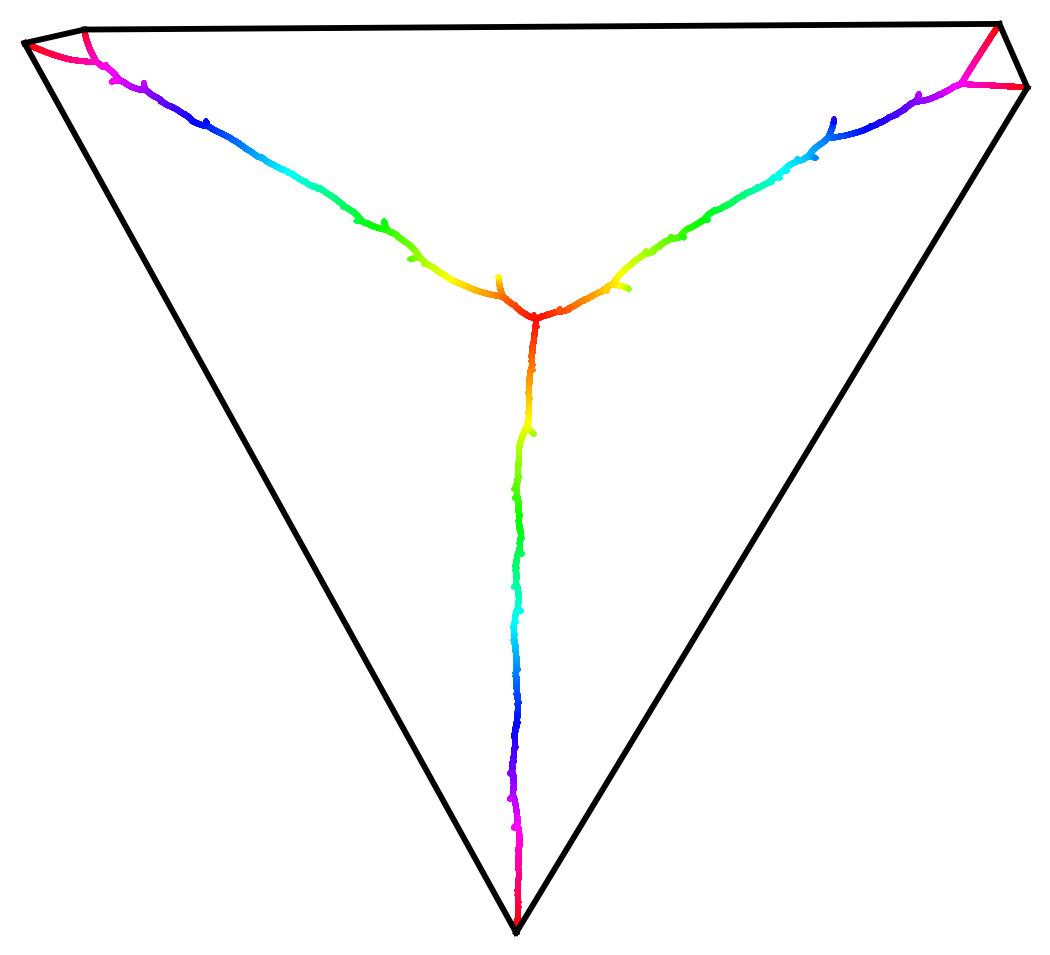}  
    \end{minipage}
    \\
\begin{minipage}{0.33\textwidth}
        \centering
        \includegraphics[width=0.99\textwidth]{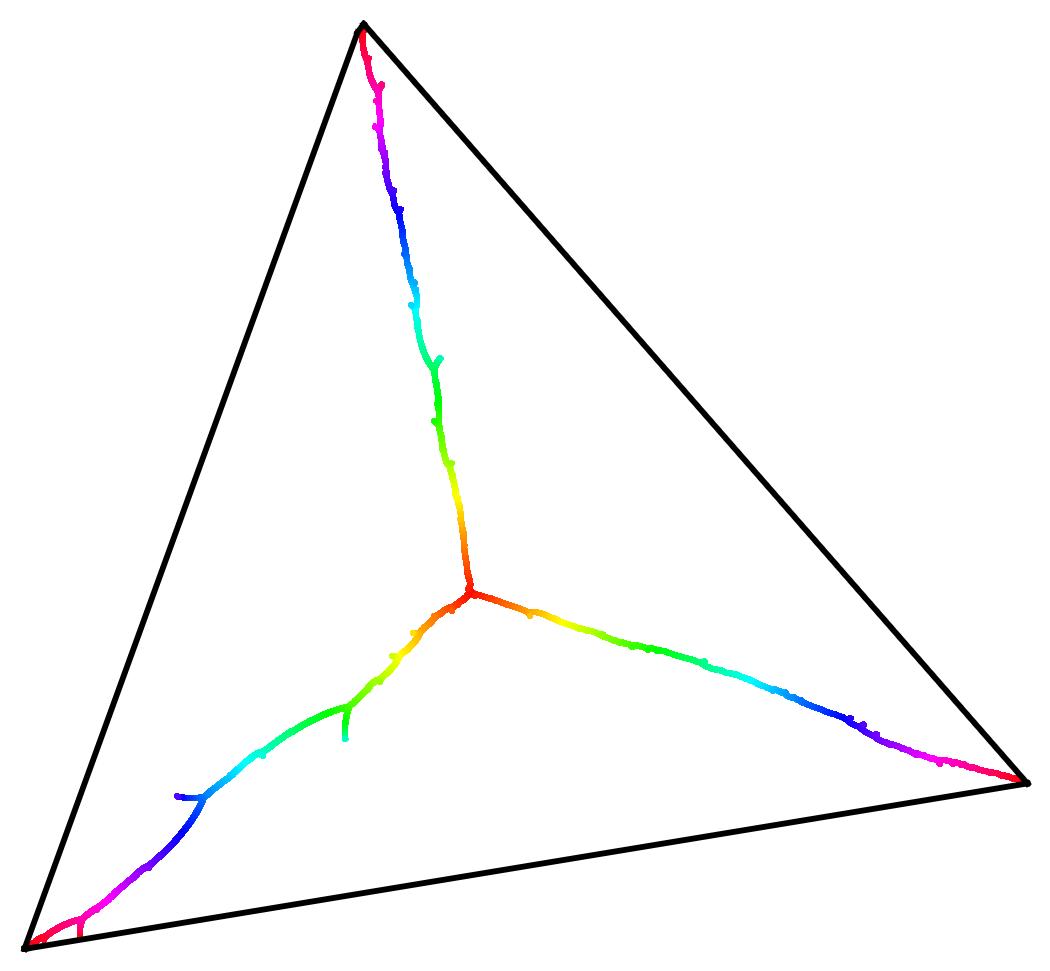}  
    \end{minipage}\hfill
    \begin{minipage}{0.33\textwidth}
        \centering
        \includegraphics[width=0.99\textwidth]{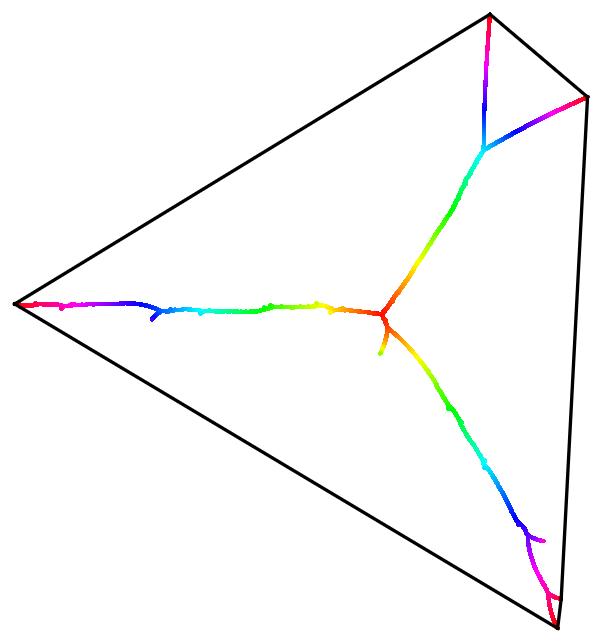}  
    \end{minipage}
    \begin{minipage}{0.33\textwidth}
        \centering
        \includegraphics[width=0.99\textwidth]{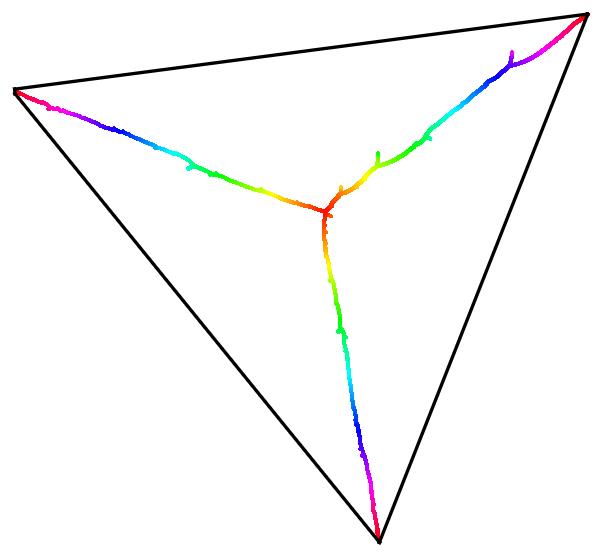}  
    \end{minipage}
    \caption{Nine simulations of clusters. Colors indicate the order of attachment. Boundaries of convex hulls are black. The number of discs in a cluster: $10^4$ in the top row, $10^5$ in the middle row, $10^6$ in the bottom row. } \label{fig_basic}
\end{figure}

\section{Directional growth}\label{o30.12}

Observations made in Section \ref{o30.2} give support to the following 
speculation. It appears that for large $n$, the convex hull $C_n$ is close to a polygonal line,
typically a triangle or quadrangle. Given a large $n$, for the next $k$ steps, if $k\ll n$, the edges of the polygonal
line will not substantially change their angles. For this reason, the evolution of the convex hull
might be decomposed into independent evolutions close to (approximate) vertices.

We will study a model of evolution at a single ``vertex.''
Suppose that $C_n$ is close to a triangle positioned so that two of the
edges of $\prt C_n$, say $I_1$ and $I_2$, are inclined at angles $\pi/2-\theta$ with the vertical
in the opposite directions (see Fig. \ref{fig1})
and form an approximate vertex.
We will assume that the other two vertices are ``at infinity,'' in the sense that
the angles formed by $I_1$ and $I_2$ with the vertical never change, i.e., they are always equal to $\pi/2-\theta$.

\begin{figure} [h]
\includegraphics[width=0.9\linewidth]{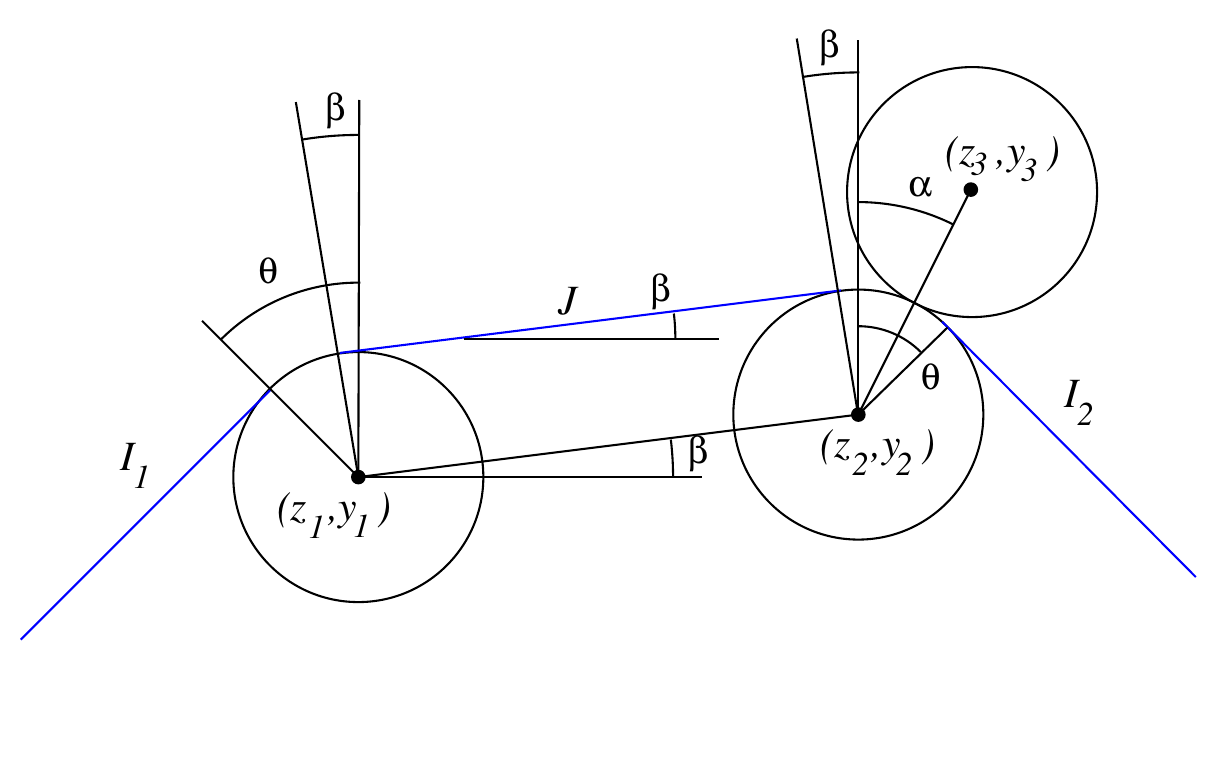}
\caption{ Blue lines are line segments in the convex hull. The ones on the left and right always form angles $\pi/2-\theta$ with the vertical.
The one in the middle forms a varying angle $\beta$ with the horizontal. A new disc has been attached to the right disc. The line passing through
the centers of the discs on the right forms an angle $\alpha$ with the vertical.}
\label{fig1}
\end{figure}

Suppose that there is only one extremal disc (in this neighborhood) and $\theta< \pi /4$. Then the angle between $I_1$ and $I_2$ is obtuse
and it is elementary to see that if we add discs to the cluster in this neighborhood then there will be always only one extremal disc in this area.

If there are two extremal discs then there will be a third edge between them, say $J$.
If $\theta < \pi/4$ then sooner or later, after some discs are added, only one extremal disc will remain.

We turn to the interesting case when $\theta > \pi/4$. Then the number of extremal discs is unlimited
but only the case of two extremal discs seems to be interesting. Our previous observations imply that as long as the angles between $I_1$ and $J$, and $J$ and $I_2$ are greater than $\pi/2$, there will be two extremal discs. It is easy to see that the distance between the centers of the two extremal discs 
has to grow. Hence, we could have a fork with two branches growing at comparable speeds. Binary forks are visible in most of our simulations.
It is clear from the simulations that one of the branches of a fork has to terminate. This is because the shorter branch grows slower and so its tip will disappear from the boundary of the convex hull.

We will try to determine the probability of the occurrence of macroscopic branches as a function of $\theta$.
If there is a fork, sub-forks are unlikely until it is very skewed so we will assume that forks occur sequentially. 
Let $T_k$ be the lifetime of the $k$-th fork.
The standard limit theorems for i.i.d. random variables show that macroscopic branches will occur if 
\begin{align}\label{o30.3}
    \P(T_k >n) > c n^{-\mu}
\end{align}
with $\mu<1$, and macroscopic branches will not occur if 
\begin{align}\label{o30.4}
    \P(T_k >n) < c n^{-\mu}
\end{align}
with $\mu>1$, for large $n$.

The next two sections are devoted to calculations trying to determine whether \eqref{o30.3}-\eqref{o30.4} are true and how $\mu$ depends on $\theta$. Of special interest is the case $\theta=\pi/3$ because it represents the neighborhood of a vertex in an equilateral triangle. Since equilateral triangles seem to be typical shapes for large clusters and our simulations show occasional macroscopic side branches, one would like to know whether these large side branches have a non-vanishing probability when $n$ goes to infinity.

\section{Calculation of one-step transition parameters}\label{o31.1}

Recall that the discs have radii $1/2$ and refer to Fig. \ref{fig1} for notation.

Let $(\calZ_n,\calY_n)$ denote the vector from the center of the left disc to the center of the right disc
after $n$ steps ($(\calZ_n,\calY_n)=(z_2-z_1, y_2-y_1)$ in Fig. \ref{fig1}).
Let $\calX_n=\calY_n/\calZ_n $ be the tangent of the angle between the line through the centers of the discs and the horizontal
($\calX_n= \tan\beta$ in Fig. \ref{fig1}).

\begin{theorem}\label{o30.10}
    We have
\begin{align}\label{o30.5}
     &\lim_{z\to\infty} z \E(\calX_{n+1}-\calX_n\mid \calX_n=x\geq 0,\calZ_n=z) 
     = \frac{x\cos\theta}{\theta},\\
     & \lim_{z\to\infty} \frac{z}{x} \E(\calX_{n+1}-\calX_n \mid \calX_n=x\geq 0,\calZ_n=z) 
     = \frac{\cos\theta}{\theta},\label{o30.6}\\
     &\lim_{z\to\infty} z^2\Var(\calX_{n+1}-\calX_n\mid \calX_n=x\geq 0,\calZ_n=z)\label{o30.7}\\
     &\qquad=\frac{1}{2 \theta ^2}\left( \left(x^2+1\right)\theta ^2-2 x^2 \cos ^2(\theta )-  \left(x^2-1\right) \theta\sin (\theta ) \cos (\theta )\right),\notag\\
     &\lim_{x\to0}\lim_{z\to\infty} z^2 \Var(\calX_{n+1}-\calX_n\mid \calX_n=x\geq 0,\calZ_n=z)
     =\frac{1}{2 \theta ^2}\left( \theta ^2+ \theta\sin (\theta ) \cos (\theta )\right).\label{o30.8}
\end{align}
\end{theorem}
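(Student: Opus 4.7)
The plan is to set up the geometry explicitly, write the one-step transition $\calX_{n+1}$ as a function of the attachment angle $\alpha$, expand to leading order in $z^{-1}$, and integrate against the uniform arc-length density; all four limits then reduce to elementary trigonometric integrals. The first task is to locate the tangent points of $J$, $I_1$ and $I_2$ on the relevant extremal discs. Because $J$ is a common external tangent to two discs of equal radius $1/2$, it is parallel to the line of centres, and a short calculation shows that $J$ is tangent to each disc at angle $-\beta$ from vertical (measured clockwise, so $-\beta$ is upper-left), where $\beta=\arctan x$. In the same convention, $I_2$ touches the right disc at angle $\theta$, and $I_1$ touches the left disc at angle $-\theta$. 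Consequently the boundary arc of the convex hull on the right disc runs from $-\beta$ to $\theta$ through the top and has length $\theta+\beta$, while the arc on the left disc runs from $-\theta$ to $-\beta$ along the upper-left and has length $\theta-\beta$. The total accessible arc length is $2\theta$, and conditional on attachment the angle $\alpha$ is uniform on $[-\beta,\theta]$ for the right disc and uniform on $[-\theta,-\beta]$ for the left, both with overall density $1/(2\theta)$.

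Next I determine the effect of a single attachment on $(\calZ,\calY)$. For attachment to the right disc at angle $\alpha\in[-\beta,\theta]$, the new right centre sits at $(\sin\alpha,\cos\alpha)$ relative to the old one, giving
\[
\calX_{n+1}=\frac{xz+\cos\alpha}{z+\sin\alpha}=x+\frac{\cos\alpha-x\sin\alpha}{z}+O(z^{-2}),
\]
and a symmetric calculation for attachment to the left disc at angle $\alpha\in[-\theta,-\beta]$ produces $\calX_{n+1}-\calX_n=(x\sin\alpha-\cos\alpha)/z+O(z^{-2})$. To confirm that the new disc indeed takes over as the tracked extremal in the limit, I verify that the common tangent from the far disc to the new disc separates the previous tip from its side by more than $1/2$; this reduces to the elementary inequality $\cos\alpha\geq x\sin\alpha$, which holds throughout the attachment ranges under the obtuse-angle hypothesis $\theta+\beta<\pi/2$.

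For the drift I integrate against the density $1/(2\theta)$. With antiderivative $H(\gamma)=\sin\gamma+x\cos\gamma$ of $\cos\gamma-x\sin\gamma$, the two contributions combine into $H(\theta)+H(-\theta)-2H(-\beta)=2x\cos\theta+2(\sin\beta-x\cos\beta)$. Dividing by $2\theta z$ and using the defining relation $\sin\beta=x\cos\beta$ (since $\beta=\arctan x$) kills the second term, leaving the clean asymptotic $z\,\E[\calX_{n+1}-\calX_n\mid\cdot]\to x\cos\theta/\theta$ of \eqref{o30.5}, with \eqref{o30.6} following by division by $x$. For the second moment the two contributions merge even more neatly, since $(\cos\alpha-x\sin\alpha)^2=(x\sin\alpha-\cos\alpha)^2$; they combine into
\[
\frac{1}{2\theta}\int_{-\theta}^{\theta}(\cos\alpha-x\sin\alpha)^2\,d\alpha,
\]
whose cross term $\int_{-\theta}^{\theta}\sin\alpha\cos\alpha\,d\alpha$ vanishes by symmetry. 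Using $\int_{-\theta}^{\theta}\cos^2\alpha\,d\alpha=\theta+\sin\theta\cos\theta$ and $\int_{-\theta}^{\theta}\sin^2\alpha\,d\alpha=\theta-\sin\theta\cos\theta$, and subtracting $(x\cos\theta/\theta)^2$ for the square of the drift, yields \eqref{o30.7}; specialising to $x=0$ gives \eqref{o30.8}.

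The main obstacle is getting the first step right: when $\beta>0$ the two accessible arcs have unequal lengths $\theta+\beta$ and $\theta-\beta$ rather than being symmetric, and it is this imbalance that generates the $2\sin\beta$ term in the unnormalised drift. That term cancels $-2x\cos\beta$ only because of the identity $\tan\beta=x$, which is precisely what produces the clean asymptotic $x\cos\theta/\theta$; any miscounting of the arcs would destroy this cancellation. Once the geometric bookkeeping is in place, the remainder is a direct Taylor expansion followed by routine trigonometric integration.
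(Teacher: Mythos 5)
Your proof is correct, and it reaches the same conclusion as the paper's but by a genuinely cleaner route. The paper sets up the same one-step transition $\calX_{n+1}=(\calX_n\calZ_n\pm\cos\alpha)/(\calZ_n\pm\sin\alpha)$ and the same integration ranges (in your coordinates $[-\beta,\theta]$ for the right disc and $[-\theta,-\beta]$ for the left, density $1/(2\theta)$), but then evaluates the integrals \emph{exactly} with Mathematica, producing formulas spanning several lines involving nested $\arctan$'s and logarithms, and only afterwards extracts the $z\to\infty$ (equivalently $s=1/z\to 0$) asymptotics by a second Mathematica-generated Taylor expansion. You instead pass the limit through the integral first: since $\calX_{n+1}-\calX_n=(\cos\alpha-x\sin\alpha)/(z+\sin\alpha)$ exactly, $z(\calX_{n+1}-\calX_n)\to\cos\alpha-x\sin\alpha$ and $z^2(\calX_{n+1}-\calX_n)^2\to(\cos\alpha-x\sin\alpha)^2$ uniformly over the compact range of $\alpha$, so dominated convergence reduces \eqref{o30.5}--\eqref{o30.6} to the antiderivative computation $H(\theta)+H(-\theta)-2H(-\beta)=2x\cos\theta$ (using $\tan\beta=x$), and \eqref{o30.7}--\eqref{o30.8} to the elementary $\int_{-\theta}^{\theta}(\cos\alpha-x\sin\alpha)^2\,d\alpha$, where your observation that the left and right second-moment integrands coincide merges the two contributions into a single symmetric integral in which the cross term vanishes. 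The final step, $z^2\Var=z^2\E[(\Delta\calX)^2]-(z\E[\Delta\calX])^2$ with both factors already shown to converge, is also sound. What your approach buys is a hand-checkable proof with no heavy symbolic algebra; what the paper's buys is fully explicit non-asymptotic formulas \eqref{o30.8a} and \eqref{o30.9}, which it records in a remark even though they are not used. The side remark you make about the tangent-separation inequality $\cos\alpha\geq x\sin\alpha$ is not actually needed for the limit computation itself (the paper does not invoke it either); it belongs to the justification of the underlying Markov-chain model rather than to the proof of this theorem, so it could be dropped without harm.
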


\begin{remark}
    We have fully explicit formulas for $\E(\calX_{n+1}-\calX_n\mid \calX_n=x\geq 0,\calZ_n=z) $
    and $\E((\calX_{n+1}-\calX_n)^2\mid \calX_n=x\geq 0,\calZ_n=z) $, and, therefore, for
    $\Var(\Delta x\mid \calX_n=x\geq 0,\calZ_n=z)$, see \eqref{o30.8a} and \eqref{o30.9}. The formulas are so complicated that they do not seem to be useful in the non-asymptotic form.
\end{remark}

\begin{proof}[Proof of Theorem \ref{o30.10}]
Many calculations presented in this proof were performed with Mathematica.

If a disc is attached to the right disc at an angle $\alpha$ from the vertical line, with the positive angle to the right, then
\begin{align*}
    \calY_{n+1} &= \calY_n + \cos\alpha,\\
    \calZ_{n+1} & =\calZ_n + \sin \alpha ,\\
    \calX_{n+1}=\calY_{n+1}/\calZ_{n+1}& = ( \calY_{n} + \cos \alpha)/( \calZ_n + \sin\alpha)= ( \calX_n\calZ_n + \cos \alpha)/( \calZ_n + \sin\alpha).
\end{align*}
When a disc is attached to the left disc at an angle $\alpha$ from the vertical line, with the positive angle to the left,
then
\begin{align*}
    \calY_{n+1} &= \calY_n - \cos\alpha,\\
    \calZ_{n+1} & =\calZ_n + \sin \alpha ,\\
    \calX_{n+1}=\calY_{n+1}/\calZ_{n+1}& = ( \calY_{n} - \cos \alpha)/( \calZ_n + \sin\alpha)= ( \calX_n\calZ_n - \cos \alpha)/( \calZ_n + \sin\alpha).
\end{align*}

If $\calX_n=x\geq 0$ and $\calZ_n=z$,
the expected value of $\calX_{n+1}-\calX_n$  after one disc is attached is
\begin{align}\notag
 \E&(\calX_{n+1}-\calX_n \mid \calX_n=x\geq 0,\calZ_n=z)\\
 &= \frac{1}{2\theta}  \int_{-\arctan{x}}^\theta \left(\frac {xz + \cos \alpha}{ z + \sin\alpha} -x\right) d\alpha
+  \frac{1}{2\theta}  \int_{\arctan{x}}^\theta \left(\frac {xz - \cos \alpha}{ z + \sin\alpha} -x\right) d\alpha\notag\\
&=-\frac{1}{2 \theta } \Bigg(2\theta x-\frac{4 z x \arctan\left(\frac{z \tan \left(\frac{\theta }{2}\right)+1}{\sqrt{z^2-1}}\right)}{\sqrt{z^2-1}}\label{o30.8a}\\
&\qquad +\frac{2 z x \arctan\left(\frac{z \tan \left(\frac{1}{2} \arctan(x)\right)+1}{\sqrt{z^2-1}}\right)}{\sqrt{z^2-1}}+\frac{2 z x \arctan\left(\frac{1-z \tan \left(\frac{1}{2} \arctan(x)\right)}{\sqrt{z^2-1}}\right)}{\sqrt{z^2-1}}\notag\\
&\qquad -\log \left(z+\frac{x}{\sqrt{x^2+1}}\right)+\log \left(z-\frac{x}{\sqrt{x^2+1}}\right)\Bigg).\notag
\end{align}
We will examine the behavior of the above expression when $z\to \infty$.
The quantity goes to 0 like $1/z$ so we multiply consecutive lines by $z$ and take the limits as $z\to \infty$.
\begin{align*}
    &\lim_{z\to\infty}\left( -\frac{1}{2\theta} z \left(2\theta x-\frac{4 z x \arctan\left(\frac{z \tan \left(\frac{\theta }{2}\right)+1}{\sqrt{z^2-1}}\right)}{\sqrt{z^2-1}}\right)\right)
    = \frac{2 x \cos ^2\left(\frac{\theta }{2}\right)}{\theta }
    = \frac{1}{\theta} x (1+\cos\theta),\\
    &\lim_{z\to\infty}\left( -\frac{1}{2\theta} z 
    \left(\frac{2 z x \arctan\left(\frac{z \tan \left(\frac{1}{2} \arctan(x)\right)+1}{\sqrt{z^2-1}}\right)}{\sqrt{z^2-1}}
    +\frac{2 z x \arctan\left(\frac{1-z \tan \left(\frac{1}{2} \arctan(x)\right)}{\sqrt{z^2-1}}\right)}{\sqrt{z^2-1}}\right)\right)\\
    &\qquad= -\frac{x \left(\frac{1}{\sqrt{x^2+1}}+1\right)}{\theta },\\
      &\lim_{z\to\infty}\left( -\frac{1}{2\theta} z \left(-\log \left(z+\frac{x}{\sqrt{x^2+1}}\right)+\log \left(z-\frac{x}{\sqrt{x^2+1}}\right)\right)\right)
    = \frac{x}{\theta  \sqrt{x^2+1}}.
\end{align*}
Combining the above into one formula, we obtain
\begin{align*}
     &\lim_{z\to\infty} z \E(\calX_{n+1}-\calX_n\mid \calX_n=x\geq 0,\calZ_n=z) \\
     &\qquad= \frac{1}{\theta}
     \left( \frac{x}{ \sqrt{x^2+1}} -  x \left(\frac{1}{\sqrt{x^2+1}}+1\right) + x (1+\cos\theta)\right)
     = \frac{x\cos\theta}{\theta}.
     \end{align*}
This proves \eqref{o30.5} and \eqref{o30.6}.

We have
\begin{align}\notag
 f(x,z,\theta) &:= \E((\calX_{n+1}-\calX_n)^2\mid \calX_n=x\geq 0,\calZ_n=z)\\
 &= \frac{1}{2\theta}  \int_{-\arctan{x}}^\theta \left(\frac {xz + \cos \alpha}{ z + \sin\alpha} -x\right)^2 d\alpha
+  \frac{1}{2\theta}  \int_{\arctan{x}}^\theta \left(\frac {xz - \cos \alpha}{ z + \sin\alpha} -x\right)^2 d\alpha \notag \\
&=
\frac{1}{2 \theta }\Bigg(-\frac{4 z \left(z^2 \left(x^2-1\right)-2 x^2+1\right)  \arctan \left(\frac{z \tan \left(\frac{\theta }{2}\right)+1}{\sqrt{z^2-1}}\right)}{\left(z^2-1\right)^{3/2}} \label{o30.9} \\
&\quad+\frac{\cos (\theta )+z^2 \left(x^2-1\right) \cos (\theta )-2 \left(z^2-1\right) z x}{(z-1) (z+1) (\sin (\theta )+z)} \notag \\
&\quad+\frac{\cos (\theta )+z^2 \left(x^2-1\right) \cos (\theta )+2 \left(z^2-1\right) z x}{(z-1) (z+1) (\sin (\theta )+z)} \notag \\
&\quad+\frac{2 z \left(z^2 \left(x^2-1\right)-2 x^2+1\right)  \arctan \left(\frac{z \tan \left(\frac{1}{2}  \arctan (x)\right)+1}{\sqrt{z^2-1}}\right)}{\left(z^2-1\right)^{3/2}} \notag \\
&\quad +\frac{2 z \left(z^2 \left(x^2-1\right)-2 x^2+1\right)  \arctan \left(\frac{1-z \tan \left(\frac{1}{2}  \arctan (x)\right)}{\sqrt{z^2-1}}\right)}{\left(z^2-1\right)^{3/2}} \notag \\
&\quad-\frac{-2 z^3 x \sqrt{x^2+1}+z^2 \left(x^2-1\right)+2 z x \sqrt{x^2+1}+1}{(z-1) (z+1) \left(z \sqrt{x^2+1}-x\right)} \notag \\
&\quad -\frac{2 z^3 x \sqrt{x^2+1}+z^2 \left(x^2-1\right)-2 z x \sqrt{x^2+1}+1}{(z-1) (z+1) \left(z \sqrt{x^2+1}+x\right)} \notag \\
&\quad -2 x \log \left(z+\frac{x}{\sqrt{x^2+1}}\right)+2 x \log \left(z-\frac{x}{\sqrt{x^2+1}}\right)+2 \theta  (x-1) (x+1)\Bigg). \notag 
\end{align}

The Taylor series expansion of the function $s\to f(x,1/s,\theta)$ for $s$ around $0$ generated by Mathematica is
\begin{align*}
     &\arctan \left(\frac{\tan \left(\frac{1}{2}  \arctan (x)\right)}{\sqrt{\frac{1}{s^2}} s}\right) \left(\frac{1-x^2}{\left(\left(\frac{1}{s^2}\right)^{3/2} s^3\right) \theta }+\frac{\left(x^2+1\right) s^2}{\left(\left(2 \left(\frac{1}{s^2}\right)^{3/2}\right) s^3\right) \theta }+O\left(s^4\right)\right)\\
     &+ \arctan \left(\frac{\tan \left(\frac{\theta }{2}\right)}{\sqrt{\frac{1}{s^2}} s}\right) \left(-\frac{2 \left(x^2-1\right)}{\left(\left(\frac{1}{s^2}\right)^{3/2} s^3\right) \theta }+\frac{\left(x^2+1\right) s^2}{\left(\left(\frac{1}{s^2}\right)^{3/2} s^3\right) \theta }+O\left(s^4\right)\right)\\
     &+ \arctan \left(\frac{\tan \left(\frac{1}{2}  \arctan (x)\right)}{\sqrt{\frac{1}{s^2}} s}\right) \left(\frac{x^2-1}{\left(\left(\frac{1}{s^2}\right)^{3/2} s^3\right) \theta }+\frac{\left(-x^2-1\right) s^2}{\left(\left(2 \left(\frac{1}{s^2}\right)^{3/2}\right) s^3\right) \theta }+O\left(s^4\right)\right)\\
     &+(x-1) (x+1)\\
     &+\frac{1}{2 \theta }\Bigg(\Bigg(-\left(4 \left(x^2-1\right)\right) \cos ^2\left(\frac{\theta }{2}\right)+\left(4 \left(x^2-1\right)\right) \cos ^2\left(\frac{1}{2}  \arctan (x)\right)\\
     &\quad +\left(2 \left(x^2-1\right)\right) \cos (\theta ) +2 \sqrt{x^2+1}-\frac{4 x^2}{\sqrt{x^2+1}}\Bigg) s\Bigg)\\
     &+\frac{1}{2 \theta }\Bigg(\Bigg(-\frac{x \left(x^2-1\right)}{2 \left(x^2+1\right)}-\left(\left(2 \left(x^2-1\right)\right) \cos (\theta )\right) \sin (\theta )+\frac{1}{2} \left(x^2-1\right) \sin (2 \theta )\\
     &\quad-\frac{1}{2} \left(x^2-1\right) \cos \left(\frac{1}{2}  \arctan (x)\right) \left(\sin \left(\frac{1}{2}  \arctan (x)\right)-\sin \left(\frac{3}{2}  \arctan (x)\right)\right)\Bigg) s^2\Bigg)\\
     & +\frac{1}{2 \theta }\Bigg(\Bigg(2 \left(x^2-1\right) \cos (\theta ) \sin ^2(\theta )\\
     &\quad+\left((2 x) \sqrt{x^2+1}\right) \left(\frac{x^3}{\left(x^2+1\right)^2}+\frac{x}{x^2+1}\right)\\
     &\quad-\left((2 x) \sqrt{x^2+1}\right) \left(-\frac{x}{x^2+1}-\frac{x^3}{\left(x^2+1\right)^2}\right)\\
     &\quad-\left(2 \left(x^2-1\right)\right) \left(\frac{x^2}{\left(x^2+1\right)^{3/2}}+\frac{1}{\sqrt{x^2+1}}\right)\\
     &\quad+\left(2 \left(x^2-1\right)\right) \cos (\theta )+2 \cos (\theta )\\
     &\quad-\frac{2}{3} \Bigg(-\left(3 x^2\right) \cos ^2\left(\frac{\theta }{2}\right)+\cos (2 \theta ) \cos ^2\left(\frac{\theta }{2}\right)-3 \cos ^2\left(\frac{\theta }{2}\right)-\left(2 \cos ^2\left(\frac{\theta }{2}\right)\right) \cos (\theta )\\
     &\qquad+\left(\left(2 x^2\right) \cos ^2\left(\frac{\theta }{2}\right)\right) \cos (\theta )-\left(x^2 \cos ^2\left(\frac{\theta }{2}\right)\right) \cos (2 \theta )\Bigg)\\
     &\quad-\frac{1}{3 \sqrt{x^2+1}}\Bigg(\left(2 \cos ^2\left(\frac{1}{2}  \arctan (x)\right)\right) \Bigg(-2 x^2+\left(x^2 \sqrt{x^2+1}\right) \cos \left(2  \arctan (x)\right)\\
     &\qquad-\sqrt{x^2+1} \cos \left(2  \arctan (x)\right)+\left(3 x^2\right) \sqrt{x^2+1}+3 \sqrt{x^2+1}+2\Bigg)\Bigg)\\
     &\quad-\frac{4 x^2}{\sqrt{x^2+1}}-\frac{2}{\sqrt{x^2+1}}-\frac{4 x^4}{3 \left(x^2+1\right)^{3/2}}\Bigg) s^3\Bigg)+O\left(s^4\right).
\end{align*}
After simplification, 

(i) the coefficients of $s$ and lower powers of $s$ are $0$, and 

(ii) the coefficient of $s^2$ is the quantity on the right hand side of \eqref{o20.1} below.

Hence,
\begin{align}\label{o20.1}
     \lim_{z\to\infty} &z^2 \E((\calX_{n+1}-\calX_n)^2\mid \calX_n=x\geq 0,\calZ_n=z)\\
     &= \frac{ \theta  \left(x^2+1\right)-\left(x^2-1\right) \sin (\theta ) \cos (\theta )}{2 \theta }\notag\\
     &= \frac{1}{2}   \left(x^2+1\right)-\frac{1}{2\theta} \left(x^2-1\right) \sin (\theta ) \cos (\theta ),\notag\\
      \lim_{z\to\infty} z^2\Var(\Delta x) &= \frac{1}{2}   \left(x^2+1\right)-\frac{1}{2\theta} \left(x^2-1\right) \sin (\theta ) \cos (\theta )
     - \left(\frac{x\cos\theta}{\theta}\right)^2\notag\\
     &=\frac{1}{2 \theta ^2}\left( \left(x^2+1\right)\theta ^2-2 x^2 \cos ^2(\theta )-  \left(x^2-1\right) \theta\sin (\theta ) \cos (\theta )\right).\notag
     \end{align}
This yields \eqref{o30.7} and \eqref{o30.8}.
\end{proof}

We have
\begin{align*}
 \E(&\calZ_{n+1}-\calZ_n\mid \calX_n=x\geq 0,\calZ_n=z)\\
 &= \frac{1}{2\theta}  \int_{-\arctan{x}}^\theta  \sin\alpha d\alpha
+  \frac{1}{2\theta}  \int_{\arctan{x}}^\theta  \sin\alpha d\alpha\\
&=\frac{1}{2\theta}  (-\cos \alpha)\Big|_{-\arctan{x}}^\theta + \frac{1}{2\theta} (-\cos \alpha)\Big|_{\arctan{x}}^\theta\\
&= \frac{1}{\theta} (\cos(\arctan{x}) - \cos \theta)= \frac{1}{\theta} \left(\frac{1}{\sqrt{1+x^2}} - \cos \theta\right).
\end{align*}
Letting $x\to 0$, we obtain
\begin{align}\label{o27.2}
    \lim_{x\to0}  \E(\calZ_{n+1}-\calZ_n\mid \calX_n=x\geq 0,\calZ_n=z) = \frac{1}{\theta} \left(1 - \cos \theta\right).
\end{align}

In view of \eqref{o27.2}, we conjecture that on a large scale, as long as $|\calX_n|$
stays below $\eps$ for some very small $\eps>0$,
$\calZ_n$ is close to a linear deterministic function $n\to Z_n:=\frac{1}{\theta} \left(1 - \cos \theta\right)n$.

\begin{conjecture}\label{o30.11}
    The process $\calX_n$ behaves like a continuous-time process $X_t$ satisfying
    \begin{align}\label{a22.3}
dX_t = \frac{\sigma (\theta)} t dW_t + \frac{\mu(\theta) X_t} t dt,
\end{align}
where $W$ is Brownian motion and the parameters 
\begin{align}\label{o27.5}
    \mu&=\mu(\theta)=\frac{(\cos\theta)/\theta}{\left(1 - \cos \theta\right)/\theta}
    = \frac{\cos\theta}{1 - \cos \theta},\\
    \sigma&=\sigma(\theta) =
    \frac{\sqrt{\frac{1}{2 \theta ^2}\left( \theta ^2+ \theta\sin (\theta ) \cos (\theta )\right)}}
    {\left(1 - \cos \theta\right)/\theta} =
    \frac{\sqrt{\left( \theta ^2+ \theta\sin (\theta ) \cos (\theta )\right)/2}}
    {1 - \cos \theta},\label{o27.6}
\end{align}
are obtained by dividing the quantities in \eqref{o30.6} and \eqref{o30.8} by the quantity in \eqref{o27.2}.
\end{conjecture}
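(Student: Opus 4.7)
The plan is to make Conjecture \ref{o30.11} precise as a strong approximation: for every $N>1$ and $\eps>0$, there should exist a coupling of $(\calX_n)_{n\ge n_0}$ with a solution $(X_t)_{t\ge n_0}$ of \eqref{a22.3} sharing the same initial condition $\calX_{n_0}=X_{n_0}=x_0$ so that $\P(\sup_{n_0\le n\le Nn_0}|\calX_n-X_n|>\eps)\to 0$ as $n_0\to\infty$, uniformly for $x_0$ in a compact neighborhood of $0$. Matching of infinitesimal generators is already essentially carried out in Section \ref{o31.1}: by Theorem \ref{o30.10} together with \eqref{o27.2}, per-step conditional drift of $\calX_n$ is $\mu(\theta)\calX_n/n+o(1/n)$ and per-step conditional variance is $\sigma(\theta)^2/n^2+o(1/n^2)$, which are precisely the drift and quadratic-variation rates of \eqref{a22.3} evaluated at $t=n$.

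\textbf{Step 1 (deterministic proxy for $\calZ_n$).} From \eqref{o27.2} and Theorem \ref{o30.10}, on the event $\{\sup_{k\le n}|\calX_k|\le\eps\}$ the per-step increment of $\calZ$ has mean $c+O(\calX_k^2)$, with $c:=(1-\cos\theta)/\theta$, and uniformly bounded second moment. An Azuma / Freedman bound then yields $|\calZ_n-cn|=O(\sqrt{n\log n})$ with probability $1-O(n^{-2})$. Substituting this into the exact expressions \eqref{o30.8a} and \eqref{o30.9} and Taylor-expanding the resulting functions of $1/\calZ_n$ around $1/(cn)$ upgrades the pointwise $z\to\infty$ limits of Theorem \ref{o30.10} to uniform-in-$x$ estimates of the form $\E[\calX_{n+1}-\calX_n\mid\calF_n]=\mu\calX_n/n+O(n^{-3/2})$ and $\Var[\calX_{n+1}-\calX_n\mid\calF_n]=\sigma^2/n^2+O(n^{-5/2})$.

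\textbf{Step 2 (Skorokhod embedding and Gronwall).} Doob-decompose $\calX_n=M_n+A_n$ and embed the martingale $M$ in a Brownian motion $W$ via Skorokhod stopping times $\tau_n$ with $\E[\tau_{n+1}-\tau_n\mid\calF_n]$ equal to the conditional variance estimated in Step 1; use the same $W$ to drive \eqref{a22.3}. Standard KMT-type tail bounds give $|\tau_n-\sum_{k=n_0}^{n-1}\sigma^2/k^2|=o(1)$ uniformly on $[n_0,Nn_0]$. Since the SDE has drift coefficient $\mu/t\le\mu/n_0$ on this interval, a Gronwall argument applied to the difference $\calX_n-X_n$ converts the embedding error and the quadrature error from Step 1 into a uniform pathwise error that vanishes in probability as $n_0\to\infty$.

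\textbf{Main obstacles.} The hardest part is extracting uniform quantitative estimates from Theorem \ref{o30.10}: its statement is a pointwise limit in $z$, and obtaining $O(z^{-\kappa})$-type rates uniformly for $x$ in a compact set requires returning to \eqref{o30.8a} and \eqref{o30.9} and tracking subleading terms, which is technically heavy though conceptually routine. Second, the geometric setup is only valid while exactly two extremal discs persist locally, so the coupling must be run up to a stopping time $\tau$ at which this breaks down; one must show $\tau>Nn_0$ with high probability whenever $|\calX|$ stays in a small neighborhood of $0$, which in turn requires a priori control of the trajectory being constructed. The most delicate point is the feedback through $1/\calZ_n$: the $O(\sqrt n)$ fluctuations of $\calZ_n$ around $cn$ perturb the drift of $\calX_n$ by $O(\calX_n n^{-3/2})$ per step, and while each individual perturbation is a centered increment, one must confirm that the accumulated nonlinear effect does not produce an $O(1)$ spurious drift that would shift the limiting SDE away from \eqref{a22.3}.
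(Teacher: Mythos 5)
The paper does not prove Conjecture~\ref{o30.11}; it states it as a conjecture and offers only the heuristic that the per-step drift and variance computed in Theorem~\ref{o30.10}, normalized by the deterministic rate~\eqref{o27.2} for $\calZ_n$, match the coefficients of~\eqref{a22.3}. So there is no proof in the paper to compare against, and what you have written must be judged as a proof strategy on its own terms.

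Your plan is structurally sensible (make the statement precise as a coupling, control $\calZ_n$, match generators, embed and Gronwall), but there are two concrete problems. First, the precise statement you propose --- $\P\bigl(\sup_{n_0\le n\le Nn_0}|\calX_n-X_n|>\eps\bigr)\to 0$ with $N$ and $\eps$ \emph{fixed} --- is essentially vacuous. Over $[n_0,Nn_0]$ the accumulated quadratic variation of~\eqref{a22.3} is $\int_{n_0}^{Nn_0}\sigma^2 t^{-2}\,dt = O(n_0^{-1})$, so both $X$ and (by the same estimates) $\calX$ stay within $o(1)\ll\eps$ of $x_0$ with probability $\to 1$ as $n_0\to\infty$, and the statement holds trivially without ever matching the dynamics. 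The conjecture's intended use is the escape-time asymptotics of Section~\ref{n1.5} (cf.\ Theorem~\ref{o27.7} and~\eqref{o31.2}), and the exit time from $[-a,a]$ is heavy-tailed on a scale far larger than $Nn_0$; a useful precision has to let $N$ grow (or, equivalently, couple up to the exit time and control the error in a scale that shrinks with $n_0$), which your Gronwall argument as stated does not address because the Gronwall constant degrades as $N\to\infty$.

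Second, the variance estimate in Step~1 is wrong as written. Equation~\eqref{o30.7} shows that $\lim_{z\to\infty} z^2\Var(\Delta\calX\mid\calX_n=x,\calZ_n=z)$ depends on $x$, and only at $x=0$ (equation~\eqref{o30.8}) does it equal the $\sigma^2$ used in~\eqref{o27.6}. On the event $|\calX_k|\le\eps$ the per-step variance is $\sigma^2/n^2 + O(\eps^2/n^2)$, not $\sigma^2/n^2+O(n^{-5/2})$; the $O(\eps^2/n^2)$ error is of the \emph{same order} as the signal, not subleading. This means the SDE~\eqref{a22.3}, with state-independent diffusion, can only be the limit if $\eps=\eps(n_0)\to 0$, which changes the nature of the approximation. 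The conjecture itself glosses over this by taking $x\to 0$ in~\eqref{o30.8}, and a rigorous version must either keep the $x$-dependent diffusion coefficient from~\eqref{o30.7} or prove that the $x$-dependence does not affect the escape-time exponent --- neither of which your outline confronts. Your own list of obstacles (uniformity in $z$, geometric validity of the two-extremal-disc picture, and feedback through $\calZ_n$) is accurate, but the two issues above are the ones most likely to change the statement rather than merely lengthen the proof.
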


Since $\mu>0$, the process $X_t$ is an ``anti-Ornstein-Uhlenbeck process,''
except that the diffusion and drift coefficients of the standard
Ornstein-Uhlenbeck process do not depend on time.

\section{Escape probability estimates for the anti-Ornstein-Uhlenbeck process}\label{secOU}

Recall that we would like to know whether \eqref{o30.3}-\eqref{o30.4} hold true
and how $\mu$ depends on $\theta$. Our choice of the notation, the same letter $\mu$ in 
\eqref{o30.3}-\eqref{o30.4} and in \eqref{o27.5} is not coincidental, as our next theorem shows.
In view of Conjecture \ref{o30.11}, the next result sheds light on the rate of 
creation of side branches in disc clusters.

\begin{theorem}\label{o27.7}
    Suppose that $X_t$ satisfies \eqref{a22.3} and the initial condition is $X_1=0$. Recall $\mu$ is defined in \eqref{o27.5}, fix an arbitrary  $a>0$ and let $T=\inf\{t\geq 0: |X_t| = a\}$.
    There exist $0<c', c''<\infty$ depending on $\theta$, and $t_1>0$ such that for $t>t_1$,
    \begin{align}\label{o22.7}
c' t^{-\mu}\leq \P(T>t) \leq c'' t^{-\mu}.
\end{align}

\end{theorem}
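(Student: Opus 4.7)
The plan is to eliminate the drift in \eqref{a22.3} via the multiplicative rescaling $Z_t := t^{-\mu}X_t$, use Dambis--Dubins--Schwarz to identify $Z$ with a time-changed Brownian motion on a bounded interval, and then control the exit probability from above by a one-point Gaussian estimate and from below via a Brownian bridge decomposition.

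Applying It\^o's formula to $Z_t = t^{-\mu}X_t$ and using \eqref{a22.3} gives $dZ_t = \sigma t^{-\mu-1}dW_t$, so $Z$ is a continuous martingale with $Z_1 = 0$ and deterministic quadratic variation $\langle Z\rangle_t = V(1-t^{-b})$, where $V := \sigma^2/(2\mu+1)$ and $b := 2\mu+1$. Dambis--Dubins--Schwarz then gives $Z_t = B_{V(1-t^{-b})}$ for a standard Brownian motion $B$. Setting $\bar B_u := B_{Vu}/\sqrt{V}$ (a standard Brownian motion on $[0,1]$), $\tilde a := a/\sqrt V$ and $\gamma := \mu/b \in (0,1/2)$, the substitution $u = 1 - s^{-b}$ rewrites
\begin{align*}
\{T > t\} \;=\; \bigl\{|\bar B_u| < \tilde a(1-u)^\gamma \ \text{for all } u \in [0,\, 1-t^{-b}]\bigr\}.
\end{align*}
The upper bound follows at once by keeping only the constraint at the terminal point $u = 1-t^{-b}$: since $\bar B_{1-t^{-b}} \sim N(0, 1-t^{-b})$ has density bounded in a neighbourhood of the origin, $\P(T > t) \leq \P(|\bar B_{1-t^{-b}}| < \tilde a\, t^{-\mu}) \leq c'' t^{-\mu}$ for $t \geq t_1$.

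For the lower bound I would decompose $\bar B_u = uY + \beta_u$, where $Y := \bar B_1 \sim N(0,1)$ and $\beta_u := \bar B_u - u\bar B_1$ is a standard Brownian bridge on $[0,1]$, independent of $Y$. Let $\calA := \{|Y| \leq (\tilde a/2) t^{-\mu}\}$ and $\calB := \{|\beta_u| \leq (\tilde a/2)(1-u)^\gamma \ \forall u \in [0,1)\}$. For $u \in [0, 1-t^{-b}]$ the inequality $(1-u)^\gamma \geq t^{-\mu}$ yields, on $\calA \cap \calB$,
\begin{align*}
|\bar B_u| \;\leq\; |uY| + |\beta_u| \;\leq\; (\tilde a/2) t^{-\mu} + (\tilde a/2)(1-u)^\gamma \;\leq\; \tilde a(1-u)^\gamma,
\end{align*}
hence $\calA \cap \calB \subset \{T > t\}$. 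Independence of $Y$ and $\beta$ combined with the Gaussian density bound $\P(\calA) \geq c_1 t^{-\mu}$ reduces the lower bound to proving $\P(\calB) \geq c_2 > 0$ uniformly in $t$.

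The main obstacle is this small-ball estimate for the Brownian bridge inside a curved envelope. Using the canonical representation $\beta_u = (1-u) W(u/(1-u))$ for a standard Brownian motion $W$ and the substitution $s = u/(1-u)$, the event $\calB$ translates to $\{\sup_{s \geq 0} |W(s)|/(1+s)^{1-\gamma} \leq \tilde a/2\}$. Crucially, $1 - \gamma > 1/2$, so the envelope $(1+s)^{1-\gamma}$ grows strictly faster than the Brownian scale $\sqrt s$. The law of the iterated logarithm guarantees the supremum is a.s.\ finite, and a standard small-ball argument---combining the classical positivity of $\P(\sup_{s \in [0, T_0]} |W(s)| < \eps)$ on a compact interval with the Markov property at $s = T_0$ and a Borel--Cantelli bound on dyadic intervals $[2^k, 2^{k+1}]$, whose probabilities of exceeding $(\tilde a/2) 2^{k(1-\gamma)}$ decay like $\exp(-c\, 2^{k(1-2\gamma)})$ by the reflection principle---then yields $\P(\calB) > 0$ and completes the proof.
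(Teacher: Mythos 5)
Your proposal reduces, after the time change, to exactly the same Brownian-motion-in-a-power-envelope problem as the paper: both arrive (via different but equivalent substitutions) at $\P(T>t)=\P\bigl(|B(c_*(1-u))|<a(1-u)^\alpha,\ u\in[0,1-t^{-b}]\bigr)$ with $b=2\mu+1$ and $\alpha=\mu/b\in(0,1/2)$. Where you diverge from the paper is in how you extract the two bounds, and in both places your route is somewhat cleaner.

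For the upper bound, the paper fixes the BM at its terminal time $c_*$ and transfers the envelope constraint from the intermediate time $c_*(1-t^{-b})$ to the endpoint using a modulus-of-continuity estimate, which requires choosing an $\eps$, introducing $u_1,u_2$, and conditioning on $S$. You simply retain the single constraint at $u=1-t^{-b}$ and bound by the Gaussian density of $\bar B_{1-t^{-b}}$; this is a genuine simplification that reaches the same rate $t^{-\mu}$ without the intermediate estimate.

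For the lower bound, the paper works directly with the Brownian path and a comparison envelope $\gamma t^\beta$ with $\alpha<\beta<1/2$, using the local LIL plus a sequence of bridge-shift and monotonicity lemmas \eqref{o25.1}--\eqref{o25.5}. You instead use the orthogonal decomposition $\bar B_u=u\,\bar B_1+\beta_u$, pay the factor $t^{-\mu}$ through the Gaussian endpoint event $\calA$, and reduce everything to the $t$-independent small-ball event $\calB$ for the bridge inside the envelope $(\tilde a/2)(1-u)^\gamma$. The bridge-to-BM map $\beta_u=(1-u)W(u/(1-u))$ converts $\calB$ into $\{\sup_{s\ge 0}|W(s)|/(1+s)^{1-\gamma}\le\tilde a/2\}$, and since $1-\gamma>1/2$ the envelope dominates the LIL scale, so $\P(\calB)>0$ by a compact small-ball estimate plus the Markov property and a Borel--Cantelli tail bound on dyadic blocks. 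This is a correct and more conceptual decomposition than the paper's: the $t$-dependence is isolated once and for all in the one-dimensional event $\calA$, whereas the paper re-derives the $t^{-\mu}$ factor from a near-endpoint conditional density estimate (\eqref{o25.5}). Your argument does require a little care to turn $\P(\calB)>0$ from a plan into a proof (one must choose the compact horizon $T_0$ large enough that the post-$T_0$ dyadic tail has probability at least, say, $1/2$, then multiply by the positive small-ball probability on $[0,T_0]$), but that is a standard and fillable gap. Overall the proposal is correct, reaches the same exponent $\mu$, and the lower-bound mechanism is a genuinely different and arguably cleaner route than the paper's.
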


\begin{remark}
    We note that, somewhat surprisingly, the bounds in \eqref{o22.7} appear to depend
    only on $\mu$. However, the formulas for constants $c'$ and $c''$ obtained in the proof
    of the theorem depend on $\mu$ and $\sigma$ defined in \eqref{o27.6}.
\end{remark}

\begin{problem}
Recall the process $\calX_n$ defined in Section \ref{o31.1}.  Fix  $a>0$ and
    let $T_\calX=\inf\{n\geq 0: |\calX_n| = a\}$. Is it true that
 for some $c_*>0$ depending on $\theta$ and sufficiently large $n$,
    \begin{align}\label{o31.2}
c_* n^{-\mu}\leq \P(T_\calX>n) \leq (1/c_*) n^{-\mu},
\end{align}
where $\mu$ is defined in \eqref{o27.5}?
\end{problem}

\begin{proof}[Proof of Theorem \ref{o27.7}]
Let $ \wt X_{t}  = X(e^t) $ so that $\wt X_0=0$. It is elementary to check that this deterministic time change
results in the following representation of $\wt X$,
\begin{align*}
d\wt X_{t} 
&= \sigma (\theta) e^{-t/2} d \wt W_{t} 
+ \mu(\theta) \wt X_{t} dt,
\end{align*}
where  $\wt W_t $ is a Brownian motion.

Let $f(x,t)=x e^{-\mu t}$ and $\wh X_t = e^{-\mu t} \wt X_t = f(\wt X_t,t)$. We have  $f_x(x,t) = e^{-\mu t}$, $f_{xx}(x,t)=0$, $f_t(x,t) = -\mu e^{-\mu t}x$. By the Ito formula,
\begin{align}\label{a22.4}
d \wh X_t &=e^{-t(\mu+1/2)} \sigma   d \wt W_{t} 
+e^{-\mu t} \mu \wt X_{t} dt
-\mu e^{-\mu t} \wt X_{t} dt\\
&= e^{-t(\mu+1/2)} \sigma   d \wt W_{t} .
\end{align}
This implies that $\wh X_t$ is a time-changed Brownian motion, with integrable quadratic variation over $[0,\infty)$, hence converging to a finite limit.
The process $\wh X_t$ is Gaussian with independent increments. Its mean is 0 and its variance is
\begin{align*}
\E \wh X_t^2 = \int_0^t \sigma^2 e^{-2(\mu+1/2) s} ds = \frac{\sigma^2}{2(\mu+1/2)} (1-e^{-2(\mu+1/2) t}).
\end{align*}
Suppose $B_t$ is  Brownian motion with $B_0=0$ and let
\begin{align}\label{a22.5}
Y_t := B\left( \frac{\sigma^2}{2(\mu+1/2)} (1-e^{-2(\mu+1/2) t})\right).
\end{align}
Then $\{Y_t, t\in[0,\sigma^2/(2(\mu+1/2))\}$
has the same distribution as $\{\wh X_t, t\in[0,\sigma^2/(2(\mu+1/2))\}$. 
Note that this representation of $\wh X$ uses the values of 
Brownian motion $B$  only on a finite time interval $[0,\sigma^2/(2(\mu+1/2))$.

We want to  estimate
\begin{align}\notag
    \P(T>t) &= \P(\inf\{s\geq 0: |X_s| = a\} >t)= \P(\inf\{s\geq 0: |\wt X_{ s}| = a\} >\log t)\\
    &= \P(\inf\{s\geq 0: |\wh X_{ s}| = a e^{-\mu s}\} >\log t)= \P(\inf\{s\geq 0: |Y_s| = a e^{-\mu s}\} >\log t)\notag\\
    &= \P\left(\inf\left\{s\geq 0: \left|B\left( \frac{\sigma^2}{2(\mu+1/2)} (1-e^{-2(\mu+1/2) s})\right)\right| = a e^{-\mu s}\right\} >\log t\right).
    \label{o22.6}
\end{align}

Let 
\begin{align}\label{o22.4}
  S= \inf\left\{s\geq 0: \left|B\left( \frac{\sigma^2}{2(\mu+1/2)} (1-e^{-2(\mu+1/2) s})\right)\right| = a e^{-\mu s}\right\} .
\end{align}

Fix an $\eps>0$ such that $1/2-2\eps(\mu+1/2)>0$.
Since $B$ is Brownian motion, standard estimates show that
there exist $c_1,c_2>0$ and $u_1$ such that for $u\geq u_1$,
\begin{align}\notag
\P\Bigg( &\left|  B\left( \frac{\sigma^2}{2(\mu+1/2)} (1-e^{-2(\mu+1/2) u})\right) - B\left(\frac{\sigma^2}{2(\mu+1/2)}\right)\right|\\
&\qquad \geq \left( \frac{\sigma^2}{2(\mu+1/2)} e^{-2(\mu+1/2) u}\right)^{1/2-\eps}\Bigg)
\leq c_1 \exp(-c_2 e^{4(\mu+1/2) u\eps}).\label{o22.5}
\end{align}

We use the assumption that $1/2-2\eps(\mu+1/2)>0$ to see that if
\begin{align*}
    u\geq u_2:= \frac{1}{1/2-2\eps(\mu+1/2)} \log \left(\frac{1}{a} \left( \frac{\sigma^2}{2(\mu+1/2)} \right)^{1/2-\eps} \right)
\end{align*}
then
\begin{align}\label{o22.3}
    a e^{-\mu u} + \left( \frac{\sigma^2}{2(\mu+1/2)} e^{-2(\mu+1/2) u}\right)^{1/2-\eps}
    < 2 a e^{-\mu u}.
\end{align}

In the following calculation, the second inequality follows from \eqref{o22.3}, and the equality follows from the definition of $S$ stated in  \eqref{o22.4}.
The second to last inequality follows from \eqref{o22.5}.
If $u_3\geq u\geq \max(u_1,u_2)$ then
\begin{align*}
    \P&\left(\left|B\left(\frac{\sigma^2}{2(\mu+1/2)}\right)\right| \leq 2 a e^{-\mu u} \mid S=u_3\right)\\
    &\geq\P\left(\left|B\left(\frac{\sigma^2}{2(\mu+1/2)}\right)\right| \leq 2 a e^{-\mu S} \mid S=u_3\right)\\
&\geq \P\Bigg( \left| a e^{-\mu S} - B\left(\frac{\sigma^2}{2(\mu+1/2)}\right)\right|
\leq \left( \frac{\sigma^2}{2(\mu+1/2)} e^{-2(\mu+1/2) u}\right)^{1/2-\eps}\mid S=u_3\Bigg)\\
&=\P\Bigg( \left|  B\left( \frac{\sigma^2}{2(\mu+1/2)} (1-e^{-2(\mu+1/2) S})\right) - B\left(\frac{\sigma^2}{2(\mu+1/2)}\right)\right|\\
&\qquad \leq \left( \frac{\sigma^2}{2(\mu+1/2)} e^{-2(\mu+1/2) S}\right)^{1/2-\eps}\mid S=u_3\Bigg)\\
&   \geq 1 - c_1 \exp(-c_2 e^{4(\mu+1/2) S\eps})\geq 1 - c_1 \exp(-c_2 e^{4(\mu+1/2) u\eps}).
\end{align*}
This implies for $ u\geq \max(u_1,u_2)$,
\begin{align*}
    \P&\left(\left|B\left(\frac{\sigma^2}{2(\mu+1/2)}\right)\right| \leq 2 a e^{-\mu u} \right)\\
    &\geq \P(S\geq u) \P\left(\left|B\left(\frac{\sigma^2}{2(\mu+1/2)}\right)\right| \leq 2 a e^{-\mu u} \mid S\geq u\right)\\
    & \geq \P(S\geq u) (1 - c_1 \exp(-c_2 e^{4(\mu+1/2) u\eps})).
\end{align*}
For some constant $c_3$ depending on $a, \mu$ and $\sigma$, and $ u\geq \max(u_1,u_2)$,
\begin{align*}
    \P\left(\left|B\left(\frac{\sigma^2}{2(\mu+1/2)}\right)\right| \leq 2 a e^{-\mu u} \right)
    \leq c_3 e^{-\mu u},
\end{align*}
so the previous formula yields
\begin{align*}
    \P(S\geq u) \leq \frac{c_3 e^{-\mu u}}{1 - c_1 \exp(-c_2 e^{4(\mu+1/2) u\eps})}.
\end{align*}
This, \eqref{o22.6} and \eqref{o22.4}
imply that for some $c_4$ and all $t\geq e^{\max(u_1,u_2)}$,
\begin{align*}
 \P(T>t)=   \P(S\geq \log t) \leq \frac{c_3 t^{-\mu }}{1 - c_1 \exp(-c_2 t^{4(\mu+1/2) \eps})} \leq c_4 t^{-\mu}.
\end{align*}
This proves the upper bound in \eqref{o22.7}.

\medskip

To simplify notation,  write $\alpha=\mu/(2(\mu+1/2))$, $ c_* =  \frac{\sigma^2}{2(\mu+1/2)}$,
and $\gamma=a/c_*^\alpha$.
If we let $w = e^{-2(\mu+1/2) s}$ and $v=c_* w$ then $e^{-\mu s} = w^{\mu/(2(\mu+1/2))} = w^\alpha$ and for $u>0$,
\begin{align}\label{o27.4}
     \P&\left(\left|B\left( \frac{\sigma^2}{2(\mu+1/2)} (1-e^{-2(\mu+1/2) s})\right)\right| \leq a e^{-\mu s},s\in[0,u)\right)\\
     &=\P\left(\left|B\left( c_* -c_*w)\right)\right| \leq a w^\alpha,w\in[e^{-2(\mu+1/2) u},1)\right)\notag\\
     &=\P\left(\left|B\left( c_*-v\right)\right| \leq \gamma v^\alpha,v\in[c_*e^{-2(\mu+1/2) u},c_*)\right).\notag
\end{align}

Note that $\alpha=\mu/(2(\mu+1/2))<1/2$ for $\mu>0$. 
Fix a $\beta\in (\alpha, 1/2)$.

Fix an arbitrary $\delta>0$. By the local LIL for Brownian motion,
there exists $t_1>0$ such that if $t_2\in(0,t_1]$ and $W$ is Brownian motion starting from $W_0=0$ then
\begin{align*}
    \P(|W_t| < \gamma t^\beta, 0< t < t_2) > 1-\delta.
\end{align*}
Consider  $t_2\in(0,t_1]$. Let $A$ be the set of $x$ such that
\begin{align*}
    \P(|W_t| < \gamma t^\beta, 0< t < t_2 \mid W_{t_2}=x) \leq 1-2\delta.
\end{align*}
Let $p=\P(W_{t_2}\in A)$. Then
\begin{align*}
    p (1-2\delta) + (1-p)\cdot 1 
    &\geq \P(W_{t_2}\in A) \P(|W_t| < \gamma t^\beta, 0< t < t_2 \mid W_{t_2}=x)\\
    &\quad + \P(W_{t_2}\in A^c) \P(|W_t| < \gamma t^\beta, 0< t < t_2 \mid W_{t_2}=x)\\
    &=  \P(|W_t| < \gamma t^\beta, 0< t < t_2)> 1-\delta.
\end{align*}
This implies that $p< 1/2$ and, therefore, $\P(W_{t_2}\in A^c)>1/2$. 
Since the distribution of $W_{t_2}$ is centered normal with variance $t_2$,
$\P(|W_{t_2}| \leq 0.5 \sqrt{t_2 }) \approx 0.38$. Hence, using symmetry,
we conclude that there must be $x_1\in A^c$ with $x_1\geq  \sqrt{t_2 }/2$.
We have 
\begin{align}\label{o23.10}
 \P(W_t < \gamma t^\beta, 0< t < t_2 \mid W_{t_2}=x_1) \geq
    \P(|W_t| < \gamma t^\beta, 0< t < t_2 \mid W_{t_2}=x_1) \geq 1-2\delta.
\end{align}
Suppose that $x_2\in[-x_1,x_1]$.
A Brownian bridge from $(0,0)$ to $(t_2, x_2)$
has the same distribution as a bridge from $(0,0)$ to $(t_2, x_1)$
plus a linear function $t\to (x_2-x_1) t/t_2$. This and \eqref{o23.10} imply that for $x_2\in[-x_1,x_1]$,
\begin{align*}
 \P(W_t < \gamma t^\beta, 0< t < t_2 \mid W_{t_2}=x_2)  \geq 1-2\delta.
\end{align*}
By symmetry,
\begin{align*}
 \P(W_t >- \gamma t^\beta, 0< t < t_2 \mid W_{t_2}=x_2)  \geq 1-2\delta,
\end{align*}
so, for $|x_2| \leq \sqrt{t_2 }/2$,
\begin{align}\label{o25.2}
 \P(|W_t| < \gamma t^\beta, 0< t < t_2 \mid W_{t_2}=x_2)  \geq 1-4\delta.
\end{align}

Let $f(t)=\gamma t^\alpha$ and $g(t) = \gamma t^\beta$.
We have $f'(t)=\gamma \alpha t^{\alpha-1}$ and $g'(t) =\gamma \beta t^{\beta-1}$
so $f(t)-g(t)$ is increasing on the interval $(0,t_*)$, where $t_*$
satisfies
\begin{align*}
    &\alpha t_*^{\alpha-1}  =\beta t_*^{\beta-1},\\
    &t_*^{\alpha-\beta} = \beta/\alpha,\\
    &t_*  = (\beta/\alpha)^{1/(\alpha-\beta)}.
\end{align*}

We will argue that if  $s\in(0,t_*/2)$, $t\in(s, t_*)$ and $x \leq \gamma s^\alpha -\gamma s^\beta$ then
\begin{align}\label{o25.1}
    \gamma(t-s)^\beta +x \leq \gamma t^\alpha.
\end{align}
It will suffice to prove that 
\begin{align}\label{o23.4}
    \gamma(t-s)^\beta +\gamma s^\alpha -\gamma s^\beta \leq \gamma t^\alpha, \qquad s\in(0,t_*/2), t\in(s, t_*).
\end{align}
The above is equivalent to 
\begin{align}\label{o23.5}
    (t-s)^\beta + s^\alpha - s^\beta \leq  t^\alpha, \qquad s\in(0,t_*/2), t\in(s, t_*).
\end{align}
We have
\begin{align}\label{o23.1}
    t^\beta + s^\alpha - s^\beta \leq  t^\alpha
\end{align}
for $t=2s$ because the function $t\to t^\alpha-t^\beta$ is increasing on $(0,t_*)$.
For the same reason, \eqref{o23.1} holds for all $t\in[2s,t_*]$.
This implies that 
\begin{align}\label{o23.2}
    (t-s)^\beta + s^\alpha - s^\beta \leq  t^\alpha
\end{align}
for $t\in[2s,t_*]$. For $t\in[s,2s]$, $(t-s)^\beta  - s^\beta\leq 0$, so
\begin{align*}
    (t-s)^\beta + s^\alpha - s^\beta \leq s^\alpha \leq t^\alpha.
\end{align*}
This and \eqref{o23.2} imply \eqref{o23.5} and, therefore, \eqref{o23.4} and \eqref{o25.1}.

Let $t_3 = \min(c_*, t_1, t_*)$.
Note that
\begin{align}\label{o25.4}
   p_1:= \P(|B(c_*-t_3)| \leq \sqrt{t_3}/4, |B(c_*-v)| \leq \gamma v^\alpha, v\in[t_3,c_*]  \mid B(0) = 0)
\end{align}
is strictly positive.

Let $t_4 >0$ be so small that if $0< s \leq t_4$, $|y|\leq\gamma(s^\alpha - s^\beta)$ and $|x| \leq \sqrt{t_3}/4$ then
\begin{align}\label{o25.3}
    |x-y| \leq \sqrt{t_3-s}/2.
\end{align}
If $s\in(0,\min(t_4,t_3/2))$,
it is easy to see that there exists $c_5>0$ such that
if $|x| \leq \sqrt{t_3}/4$ then
\begin{align}\label{o25.5}
    \P(|B(c_*- s)| \leq  \gamma(s^\alpha - s^\beta)  \mid B(c_*-t_3) = x)
    \geq c_5 s^\alpha.
\end{align}
If $|x| \leq \sqrt{t_3}/4$ and $|y|\leq\gamma(s^\alpha - s^\beta)$ then by \eqref{o25.1},
\begin{align*}
    \P&(|B(c_*- v)| \leq  \gamma v^\alpha, v\in[s,t_3]   \mid B(c_*-t_3) = x, B(c_*- s) = y )\\
    &\geq \P(|B(c_*- v)| \leq  \gamma(v-s)^\beta +|y| , v\in[s,t_3]  \mid B(c_*-t_3) = x, B(c_*- s) = y )\\
    &\geq \P(|B(c_*- v)-y| \leq  \gamma(v-s)^\beta  , v\in[s,t_3]  \mid B(c_*-t_3) = x, B(c_*- s) = y ).
\end{align*}
By time-reversibility and shift-invariance of Brownian bridges and \eqref{o25.2},
\begin{align*}
    \P&(|B(c_*- v)-y| \leq  \gamma(v-s)^\beta  , v\in[s,t_3]  \mid B(c_*-t_3) = x, B(c_*- s) = y )\\
    &=\P(|W(v)-y| \leq  \gamma(v-s)^\beta  , v\in[s,t_3]  \mid W(t_3) = x, W( s) = y )\\
    &= \P(|W_t| < \gamma t^\beta, 0< t < t_3-s \mid W_0=0, W_{t_3-s}=x-y)  \geq 1-4\delta.
\end{align*}
The last inequality follows from \eqref{o25.2} because $t_3-s \leq t_1$ and, by \eqref{o25.3}, $|x-y| \leq \sqrt{t_3-s}/2$.
The last two displayed formulas yield for $|x| \leq \sqrt{t_3}/4$ and $|y|\leq\gamma(s^\alpha - s^\beta)$,
\begin{align*}
    \P&(|B(c_*- v)| \leq  \gamma v^\alpha, v\in[s,t_3]   \mid B(c_*-t_3) = x, B(c_*- s) = y )\geq 1-4\delta.
\end{align*}
This and \eqref{o25.5} give  for $|x| \leq \sqrt{t_3}/4$,
\begin{align*}
    \P&(|B(c_*- v)| \leq  \gamma v^\alpha, v\in[s,t_3]   \mid B(c_*-t_3) = x )\geq( 1-4\delta) c_5 s^\alpha.
\end{align*}
We combine this with \eqref{o25.4} to obtain 
\begin{align*}
    \P&(|B(c_*- v)| \leq  \gamma v^\alpha, v\in[s,c_*]   \mid  B(0) = 0 )\geq p_1( 1-4\delta) c_5 s^\alpha.
\end{align*}
This,  \eqref{o22.6} and \eqref{o27.4} give
\begin{align}\notag
    \P(T>t) 
    &= \P\left(\inf\left\{s\geq 0: \left|B\left( \frac{\sigma^2}{2(\mu+1/2)} (1-e^{-2(\mu+1/2) s})\right)\right| = a e^{-\mu s}\right\} >\log t\right)\\
     &=\P\left(\left|B\left( c_*-v\right)\right| \leq \gamma v^\alpha,v\in[c_*e^{-2(\mu+1/2) \log t},c_*)\right)\label{o31.3}\\
    &= \P(|B(c_*- v)| \leq  \gamma v^\alpha, v\in[c_*t^{-2(\mu+1/2) },c_*]   \mid  B(0) = 0 )\notag\\
    &\geq p_1( 1-4\delta) c_5 (c_*t^{-2(\mu+1/2) })^\alpha = c_6 t^{-\mu}.\notag
\end{align}
This completes the proof of the lower bound in \eqref{o22.7}.
\end{proof}

\section{Self-organized criticality}\label{n1.5}

We will compare our rigorous results with the simulations presented in
Figs. \ref{fig_branch_low}, \ref{fig_branch_high} and \ref{fig_branch10_6}. The figures contain simulation results for the model introduced 
in Section \ref{o30.12}, with all clusters starting with a single extremal disc. The direction of growth is vertical (up) in Fig. \ref{fig1} but it is horizontal (right) in 
Figs. \ref{fig_branch_low}, \ref{fig_branch_high} and \ref{fig_branch10_6}, for typographical reasons. In other words, the clusters were turned 90 degrees clockwise in Figs. \ref{fig_branch_low}, \ref{fig_branch_high} and \ref{fig_branch10_6}.

Theorem \ref{o27.7} suggests that the bounds in \eqref{o31.2} hold true. Then the argument given in the paragraph containing \eqref{o30.3}-\eqref{o30.4} suggests that
macroscopic branches may grow when $\mu< 1$, i.e., when $(\cos\theta)/(1-\cos\theta)<1$, which is equivalent to $\theta> \pi/3$. 
Conversely, we expect not to see macroscopic branches when $\mu> 1$, i.e., when  $\theta> \pi/3$. 
Simulation results presented in Figs. \ref{fig_branch_low}, \ref{fig_branch_high} and \ref{fig_branch10_6} seem to support these claims, although the transition is not very sharp and seems to occur at a value slightly lower than $\pi/3$.

If $\pi/3$ is indeed the critical value of $\theta$, then Problem \ref{o30.2a} (ii), one of the most intriguing open problems, remains wide open because analysis of models at the critical value is particularly hard. On the other hand, simulations presented in Figs. \ref{fig_branch_low}, \ref{fig_branch_high} and \ref{fig_branch10_6} indicate that the critical value of $\theta$ is strictly less than $\pi/3$. Every row in Fig. \ref{fig_basic} contains convex hulls that are not triangular. Different rows represent different numbers of discs in the cluster: $10^4$, $10^5$ and $10^6$, so one may expect that macroscopic side branches occur on every scale and the probability of a non-triangular shape of $C_n$ does not go to 0 when $n\to\infty$.

The branches in our model are analogues of avalanches in sandpile SOC models, and fires in forest fire SOC models. In all of these cases, a single ``event'' (a ``relaxation event'') prevents events in the near future, at least on some scales. The size of branches in our model has power scaling, like events in other popular SOC models. 

\begin{figure}
  \centering
  \subfloat{\includegraphics[width=0.7\textwidth]{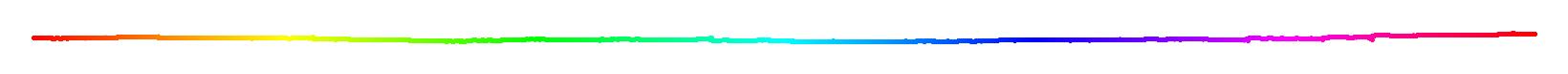} } \\
  \subfloat{\includegraphics[width=0.7\textwidth]{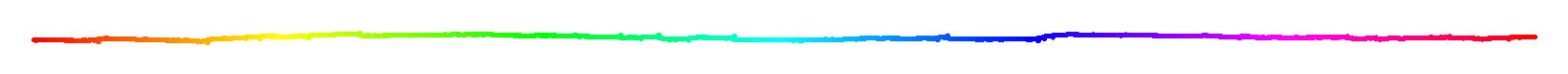} }\\
  \subfloat{\includegraphics[width=0.7\textwidth]{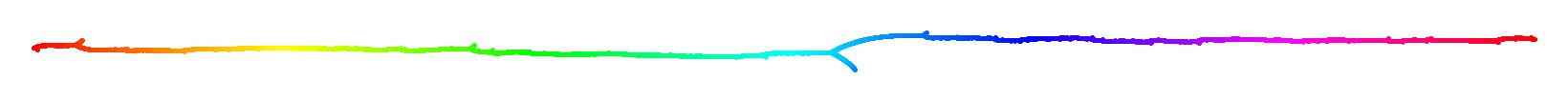} }\\
  \subfloat{\includegraphics[width=0.7\textwidth]{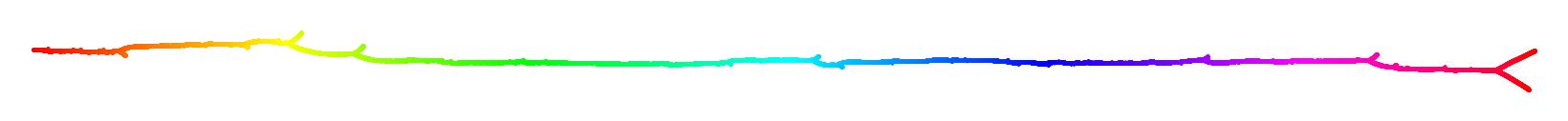} } \\
  \subfloat{\includegraphics[width=0.7\textwidth]{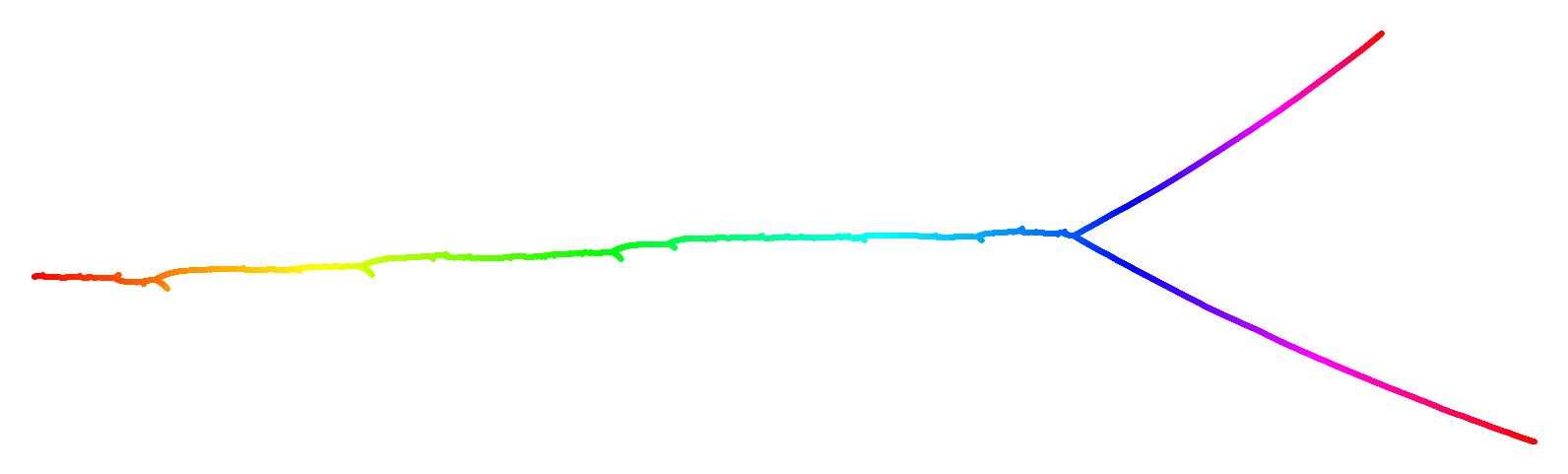} }\\
  \subfloat{\includegraphics[width=0.7\textwidth]{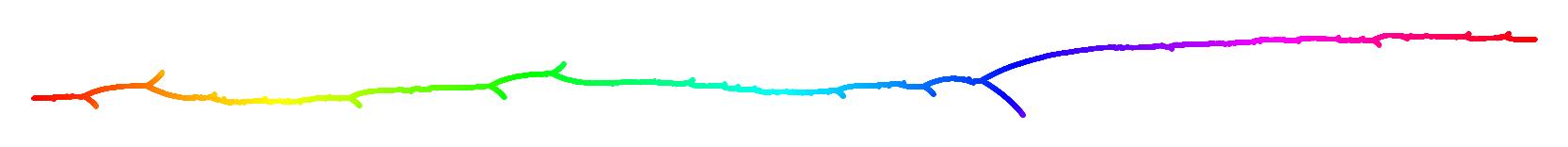} }\\
  \subfloat{\includegraphics[width=0.7\textwidth]{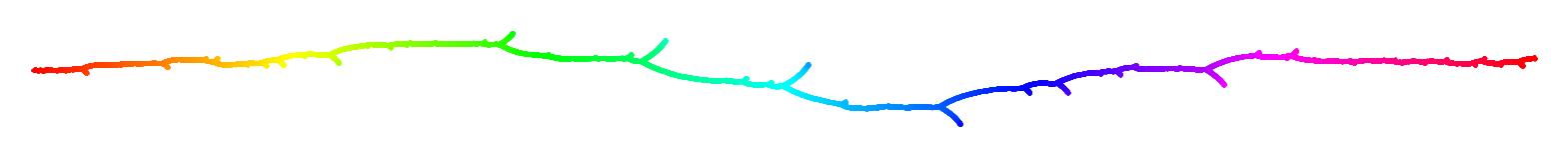} }\\
  \subfloat{\includegraphics[width=0.7\textwidth]{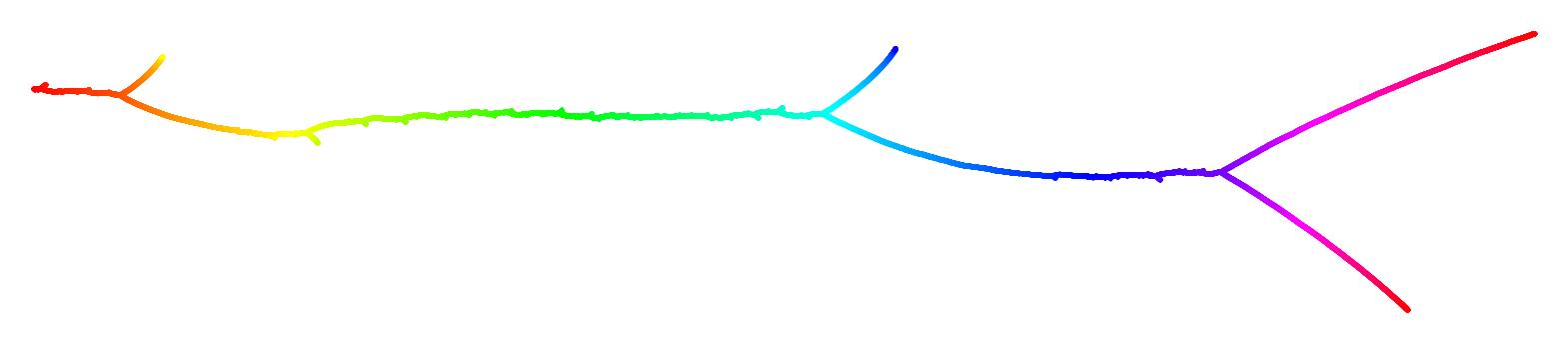} }
  \caption{Simulations of branching intensity. Each cluster contains $10^5$ discs. Colors indicate the order of attachment. The clusters represent different angles $\theta$, starting from the top: $0.3\pi$,  $0.305\pi$,  $0.31\pi$,  $0.315\pi$, $0.32\pi$, $0.325\pi$, $0.33\pi$, $0.335\pi$.} \label{fig_branch_low}
\end{figure}

\begin{figure}
  \centering
  \subfloat{\includegraphics[width=0.4\textwidth]{branchPi0_33_10_5.jpg} }\\
  \subfloat{\includegraphics[width=0.4\textwidth]{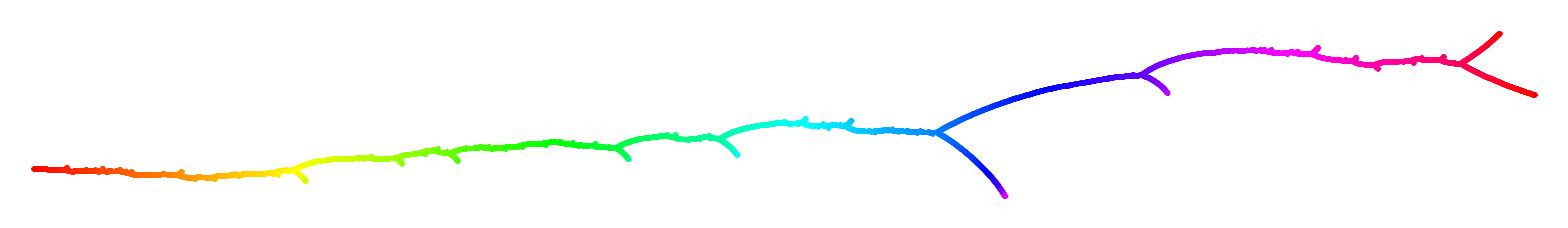} }\\
  \subfloat{\includegraphics[width=0.4\textwidth]{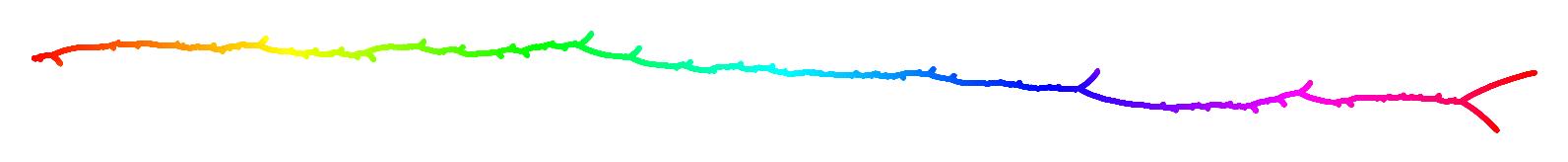} }\\
  \subfloat{\includegraphics[width=0.4\textwidth]{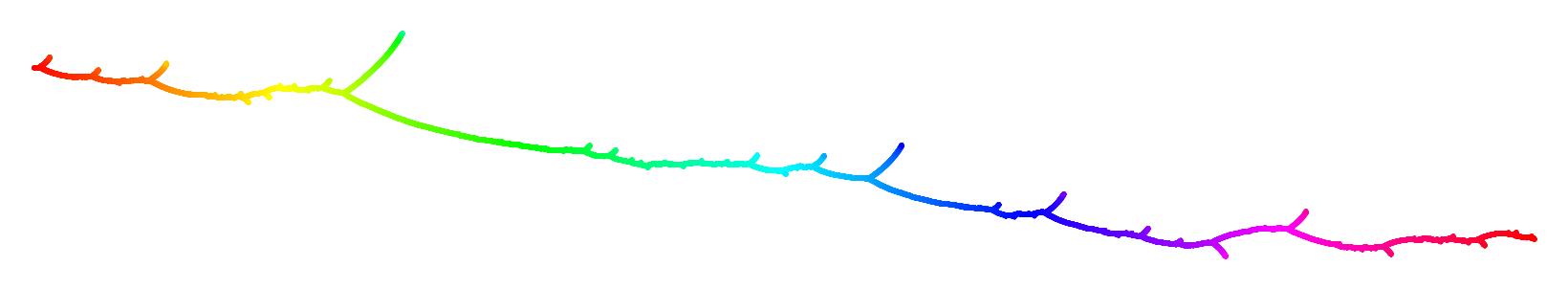} }\\
  \subfloat{\includegraphics[width=0.4\textwidth]{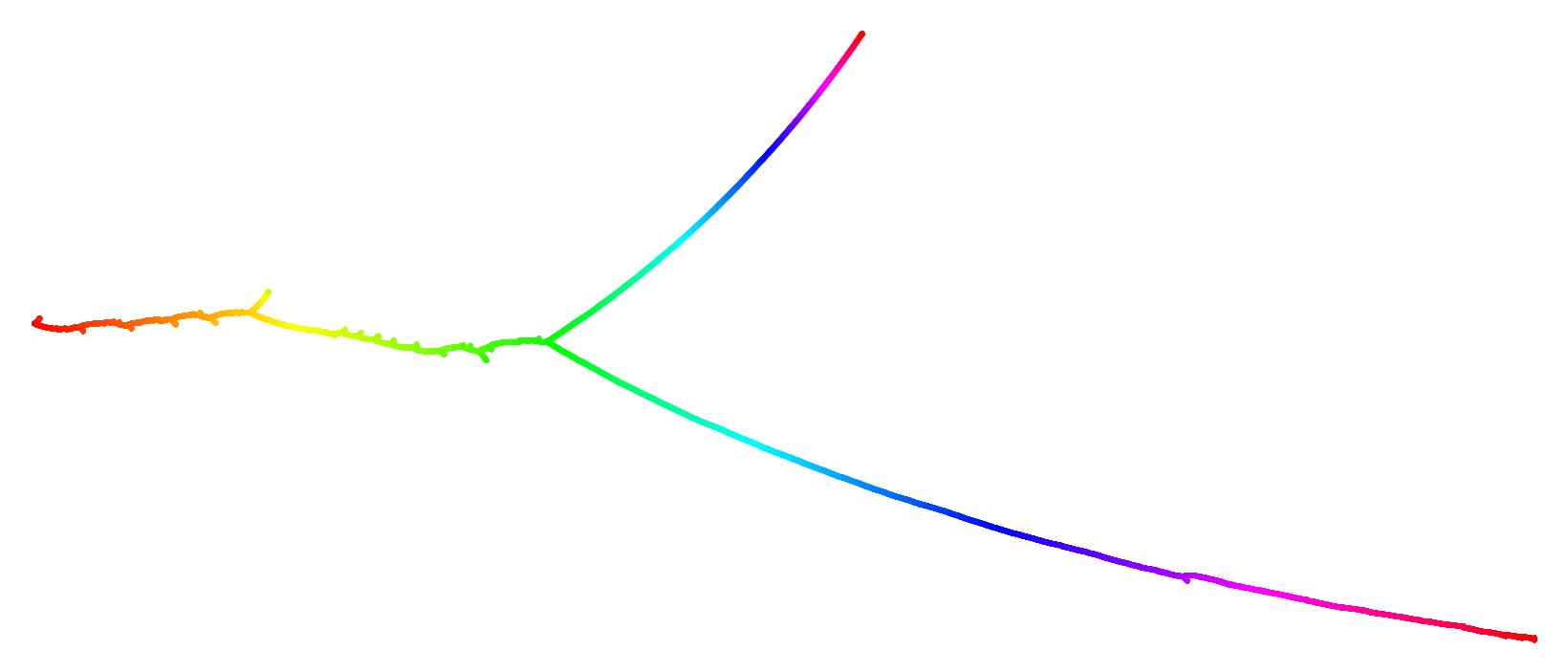} }\\
  \subfloat{\includegraphics[width=0.4\textwidth]{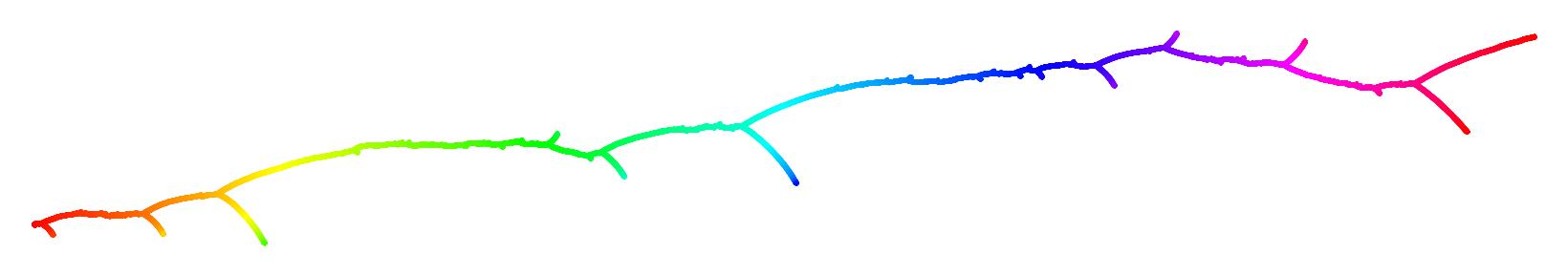} }\\
  \subfloat{\includegraphics[width=0.4\textwidth]{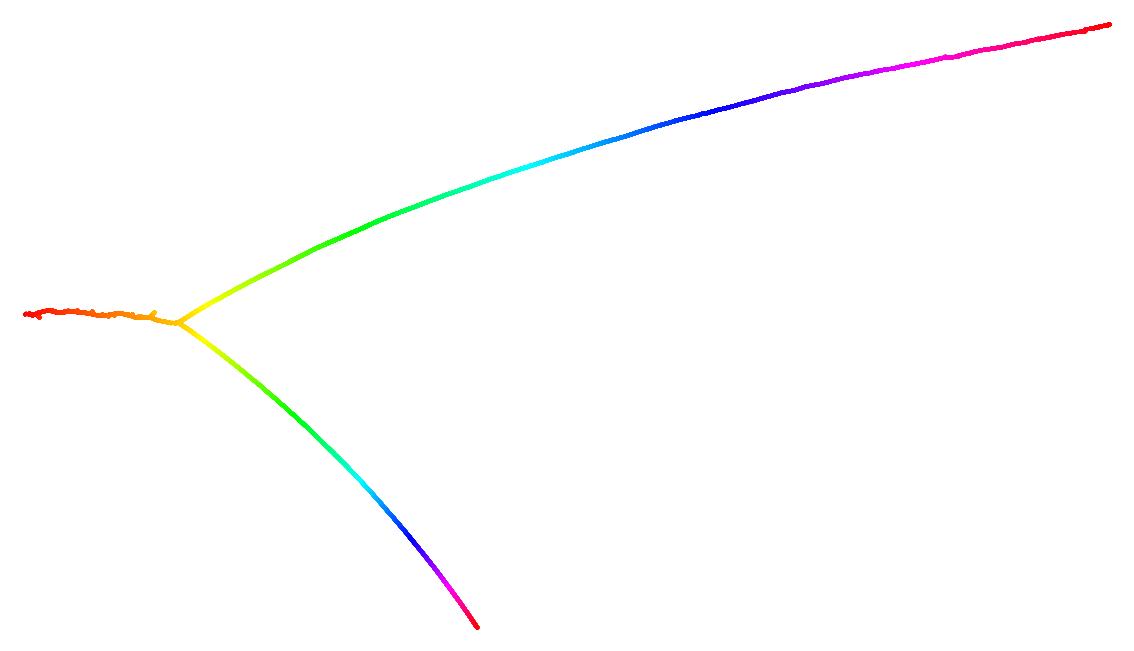} }\\
  \subfloat{\includegraphics[width=0.4\textwidth]{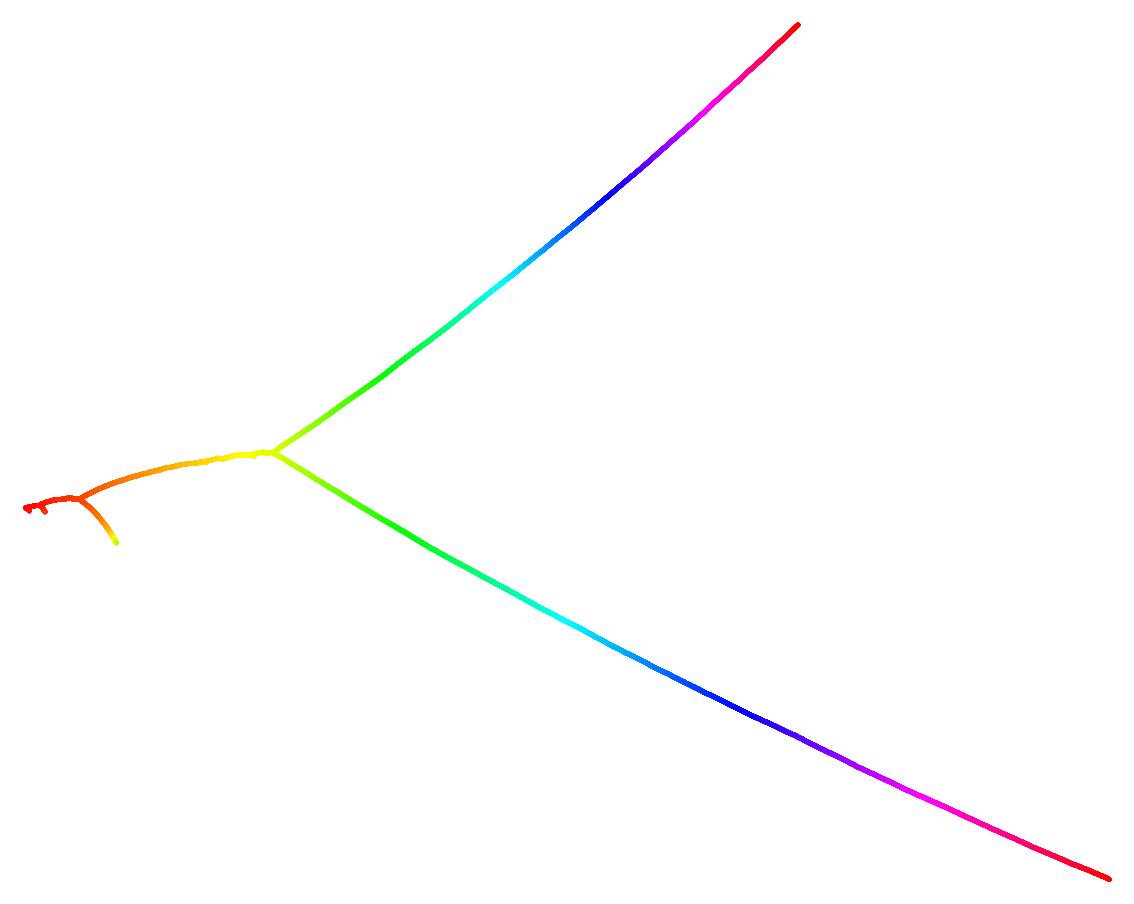} }
  \caption{Simulations of branching intensity. Each cluster contains $10^5$ discs. Colors indicate the order of attachment. The clusters represent different angles $\theta$, starting from the top:  $0.335\pi$, $0.34\pi$, $0.345\pi$, $0.35\pi$, $0.355\pi$, $0.36\pi$, $0.365\pi$, $0.37\pi$.} \label{fig_branch_high}
\end{figure}

\begin{figure}
  \centering
  \subfloat{\includegraphics[width=0.5\textwidth]{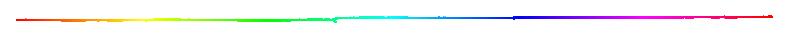} } \\
  \subfloat{\includegraphics[width=0.5\textwidth]{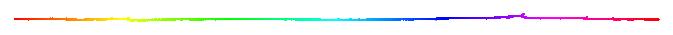} }\\
  \subfloat{\includegraphics[width=0.5\textwidth]{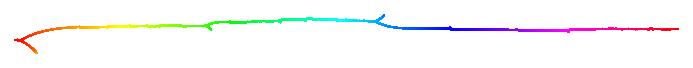} }\\
  \subfloat{\includegraphics[width=0.5\textwidth]{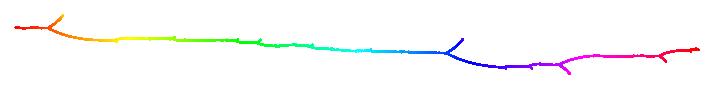} }\\
  \subfloat{\includegraphics[width=0.5\textwidth]{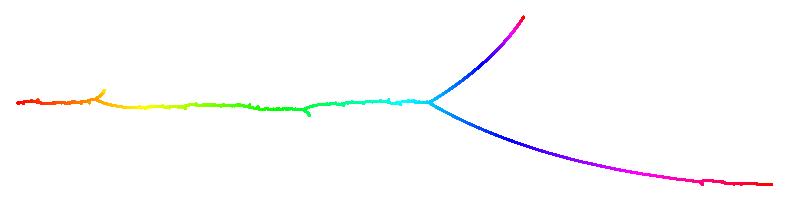} }\\
  \subfloat{\includegraphics[width=0.5\textwidth]{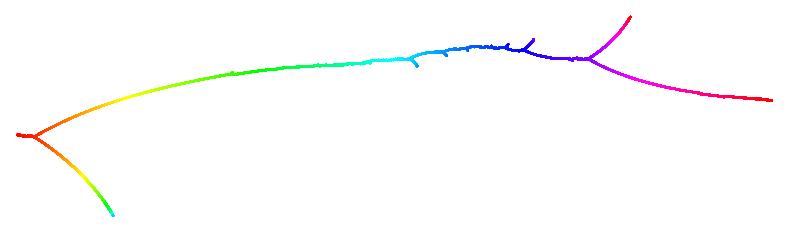} }\\
  \subfloat{\includegraphics[width=0.5\textwidth]{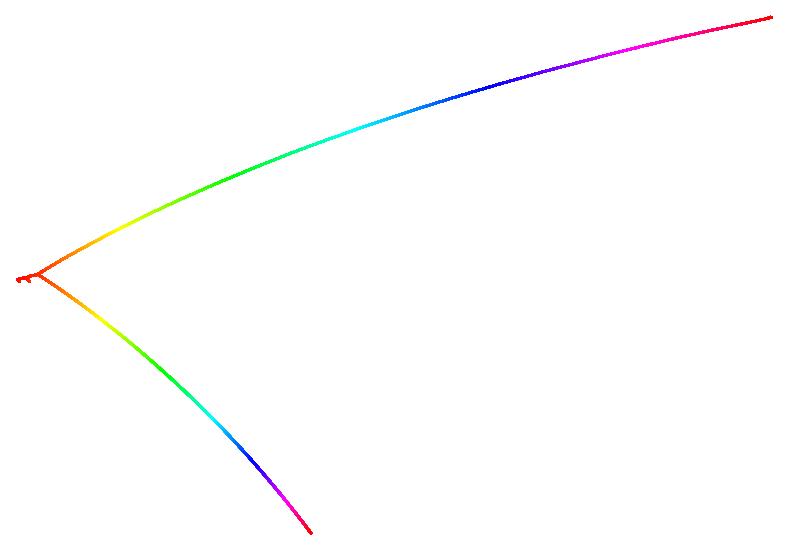} }\\
  \subfloat{\includegraphics[width=0.5\textwidth]{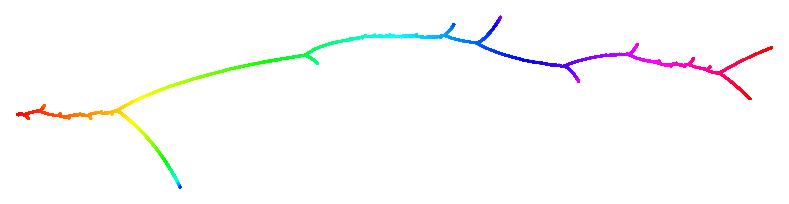} }
  \caption{Simulations of branching intensity. Each cluster contains $10^6$ discs. Colors indicate the order of attachment. The clusters represent different angles $\theta$, starting from the top: $0.3\pi$,   $0.31\pi$,  $0.32\pi$,  $0.33\pi$,  $0.34\pi$,  $0.35\pi$,  $0.36\pi$,  $0.37\pi$.} \label{fig_branch10_6}
\end{figure}

\section{Qualitative properties of $X_t$}\label{n1.6}

We will make some informal remarks on the properties of the process $X_t$ analyzed in Section \ref{secOU}.
We will refer to the results and use the notation from the proof of Theorem \ref{o27.7}.

\subsection{Conditional distribution for surviving paths at a large time}\label{n1.1}

The transformations of space and time in Section \ref{secOU} are deterministic
so, for large $t$, the shape of the distribution of $X_t$ conditioned on not hitting $\pm a$
until $t$
is the same as for Brownian motion $B$ at the corresponding time,
conditioned not to hit the transformed barrier until the transformed
time $c_*(1-t^{-2(\mu+1/2) })$,
see \eqref{o22.6}. This conditional distribution is close to the uniform distribution on $[-a,a]$. This claim captures informally the 
essence of the second part of the proof of Theorem \ref{o27.7} (i.e., the proof of the lower bound in \eqref{o22.7}.

\subsection{Typical oscillations of trajectories well before the exit time}\label{n1.2}

Fix a small $v>0$ and condition $B$ as in \eqref{o31.3}, assuming that 
$1-t^{-2(\mu+1/2) }\ll v$. The second part of the proof of Theorem \ref{o27.7}
shows that the process $\{B\left( c_*-v\right),v\in[c_*t^{-2(\mu+1/2) },c_*)\}$
is essentially a Brownian bridge. Hence, the distribution of $B\left( c_*-v\right)$ is close 
to centered normal with variance about $v$. Transforming this back to the 
language of $X_t$ shows that, for a fixed $t$, the standard deviation of $X_t$ is about $ t^{-1/2}$ if we condition $X$
not to hit $\pm a$ until $s\gg t$.

\subsection{Comparison with the standard Ornstein-Uhlenbeck process}

Consider a diffusion $R_t$ satisfying the SDE
\begin{align*}
    dR_t = a\, dB_t + b\, R_t dt,
\end{align*}
where $B$ is Brownian motion, and $a>0$ and $b\in \R$ are constants.
If $b<0$ then this process is called Ornstein-Uhlenbeck. If $b>0$, we call it
anti-Ornstein-Uhlenbeck. No matter whether $b$ is positive or negative,
the behavior of $R_t$ is totally different from that of $X_t$ discussed
in Sections \eqref{n1.1} and \eqref{n1.2}. 
First, if $t$ is very large and $R$ is conditioned not to hit $\pm a$
until $t$, the distribution of $R_t$ is not close to uniform on $[-a,a]$.
Second, if $s\gg t \gg 0$ and $R$ is conditioned not to hit $\pm a$ until
$s$ then the distribution of $R_t$ has standard deviation on the order of $a$.

\section{Polygonal convex hulls}\label{n1.7}

We will argue informally that there can be no stable polygonal shapes
with obtuse and distinct angles. More precisely, if $C_n' $ is the convex hull $C_n$
rescaled to have diameter 1, the probability that $C_n'$ will converge
to a polygon with obtuse and distinct angles is 0. 

Suppose that $n$ is very large and $C_n$ has a finite number of extremal discs, each touching two edges in $\prt C_n$ that form an obtuse angle. Consider two consecutive discs with endpoints at $x_1$ and $x_2$. Since we assume that all angles between edges in $\prt C_n$ are distinct, we can orient the coordinate system as in Fig. \ref{fig4}, so that the same angle $\gamma>0$ appears on both sides. By convention, $\beta>0$ in Fig. \ref{fig4}. This means that the angle between the edges of $\prt C_n$ at $x_1$ is more obtuse than that at $x_2$.

\begin{figure} [h]
\includegraphics[width=0.7\linewidth]{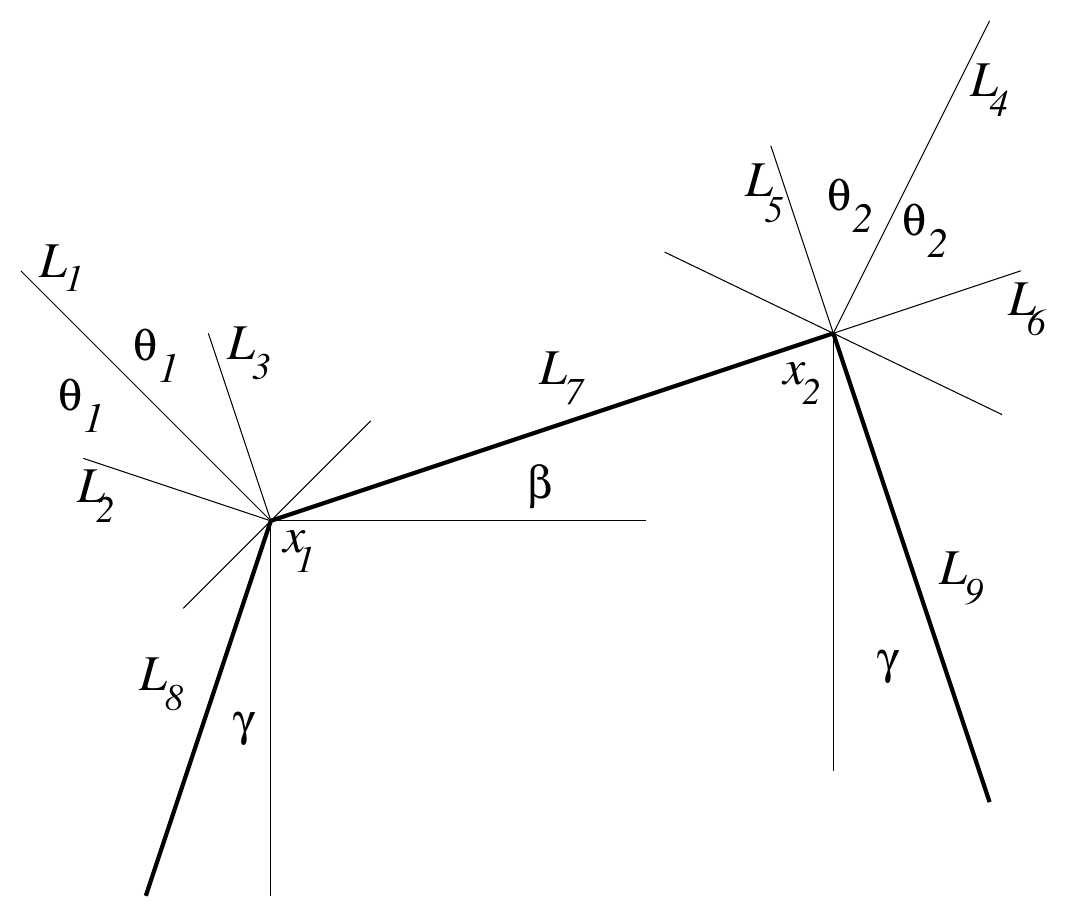}
\caption{ Thick lines represent the convex hull. The following pairs of lines are orthogonal: $L_8$ and $L_2$,  $L_3$ and $L_7$,  $L_7$ and $L_5$,  $L_6$ and $L_9$.  }
\label{fig4}
\end{figure}

Suppose that the distance between $x_1$ and $x_2$  is $r$. We will find a differential equation for $\beta$. The extremal points in the convex hull $x_1$ and $x_2$ will move along lines $L_1$ and $L_2$.  A  calculation similar to those
in Theorem \ref{o30.10} shows that if one disc is attached to the cluster, the average position of a new disc attached to the disc with center $x_1$ will be centered on $L_1$ and the average distance of its center from $x_1$ will be
\begin{align}\label{f18.3}
    &\frac{1}{2\pi} 2\int_0^{\theta_1} \cos \alpha d \alpha
    =
    \frac{1}{\pi} \sin{\theta_1},
\end{align}
and a similar claim holds for a disc attached to the disc centered at $x_2$, with $\theta_1$ replaced with $\theta_2$.
Informally,  the extremal discs with centers $x_1$ and $x_2$ will move along lines $L_1$ and $L_4$ with speeds $(1/\pi) \sin\theta_1$ and
$(1/\pi) \sin\theta_2$.
We did not condition on the new disc being attached to the disc centered at $x_1$ in \eqref{f18.3}.

The projection of the velocity vector of $x_1$ on $L_3$ has length $\frac 1 \pi \sin \theta_1 \cos \theta_1 = \frac 1 {2\pi} \sin(2\theta_1)$. We have $2\theta_1 + \beta+\gamma + 3\cdot \pi/2 = 2\pi$. Hence $2\theta_1 = \pi/2-\beta-\gamma$ and the projection of the velocity vector of $x_1$ on $L_3$ has length 
\begin{align}\label{f18.1}
    \frac 1 {2\pi} \sin(2\theta_1)
    =\frac 1 {2\pi} \sin(\pi/2-\beta-\gamma)
    =\frac 1 {2\pi} \cos(\beta+\gamma).
\end{align}
The analogous calculation for $x_2$ gives the velocity along $L_5$ equal to $ \frac 1 {2\pi} \sin(2\theta_2)$. We have $2\theta_2 +\gamma + 3\cdot \pi/2 = 2\pi+\beta$. Hence $2\theta_2 = \pi/2+\beta-\gamma$ and the projection of the velocity vector of $x_2$ on $L_5$ has a length 
\begin{align}\label{f18.2}
    \frac 1 {2\pi} \sin(2\theta_2)
    =\frac 1 {2\pi} \sin(\pi/2+\beta-\gamma)
    =\frac 1 {2\pi} \cos(-\beta+\gamma).
\end{align}
The rate of change of $\beta$ is the difference between the quantities in \eqref{f18.2} and \eqref{f18.1} divided by $r$. Thus
\begin{align}\label{f18.5}
    \frac{d\beta}{dn} = \frac{1}{r} \left(\frac 1 {2\pi} \cos(-\beta+\gamma) - \frac 1 {2\pi} \cos(\beta+\gamma)\right)
    = \frac{1}{\pi r} \sin \beta \sin \gamma.
\end{align}
Since $\beta>0$, the edge between $x_1$ and $x_2$ turns counterclockwise around $x_1$.

If $x_1$ is the center of a disc where $\prt C_n$ has the most obtuse angle then the angle
called $\gamma$
on the left in Fig. \ref{fig4} will also increase. Hence, the most obtuse angle will become even more obtuse, until it becomes $\pi$.
This completes the argument in support of our claim. Our proof is not completely rigorous because we applied the Law of Large Numbers without justification. Yet we believe that the argument can be made fully rigorous.

Note that the above argument does not apply to polygons with identical obtuse angles (regular polygonal domains) because in this case $\beta=0$. Answering Problem \ref{o30.2a} (vi) will require a more subtle analysis, most likely based on the Central Limit Theorem.

\section{Data availability statement}

There are no data associated with this paper.

\section{Acknowledgments}

This paper is the result of extensive discussions with Stefan Steinerberger
whose models, presented in \cite{stefan}, inspired the current project.
I am grateful for his advice and encouragement.
I am also grateful to Sayan Banerjee, Tomoyuki Ichiba,
Aleksandar Mijatovi\'c and Ruth Williams for very helpful advice.


\end{document}